\newif\iffull
\newtheorem*{theorem*}{Theorem}
\newtheorem{theorem}{Theorem}
\newtheorem*{lemma*}{Lemma}
\newtheorem{lemma}[theorem]{Lemma}
\newtheorem*{proposition*}{Proposition}
\newtheorem{proposition}[theorem]{Proposition}
\newtheorem*{fact*}{Fact}
\newtheorem*{question*}{Question}
\newtheorem{conjecture}[theorem]{Conjecture}
\newtheorem*{corollary*}{Corollary}
\numberwithin{claimcounter}{theorem}
\newtheorem*{claim*}{Claim}
\theoremstyle{remark}
\newtheorem*{remark*}{Remark}
\newtheorem{remark}[theorem]{Remark}
\theoremstyle{definition}
\newtheorem*{definition*}{Definition}
\newtheorem*{observation*}{Observation}
\title{Optimal bounds for the colorful fractional Helly theorem\thanks{D.~B. is
supported by GA\v{C}R grant no. 19-27871X. A.~G. is supported by KAW-stipendiet
2015.0360 from the Knut and Alice Wallenberg Foundation. M.~T. is supported by the GA\v{C}R grant 19-04113Y.}}
\author[1]{Denys Bulavka} 
\author[2]{Afshin Goodarzi} 
\author[1]{Martin Tancer}
\affil[1]{\small Department of Applied Mathematics, Charles University,
Malostransk\'{e} n\'{a}m.
25, 118~00~~Praha~1, Czech Republic}
\affil[2]{\small Royal Institute of Technology, Department of Mathematics,
S-100~44, Stockholm, Sweden}
\date{}
\title{Optimal bounds for the colorful fractional Helly theorem}
\author{Denys Bulavka}{Department of Applied Mathematics, Charles University,
Malostransk\'{e} n\'{a}m.
25, 118~00~~Praha~1, Czech
Republic}{dbulavka@kam.mff.cuni.cz}{0000-0002-4119-9402}{Supported by the GA\v{C}R grant 19-27871X.}
\author{Afshin Goodarzi}{Royal Institute of Technology, Department of
Mathematics,
S-100~44, Stockholm, Sweden}{afshingo@kth.se}{}{Supported by KAW-stipendiet
2015.0360 from the Knut and Alice Wallenberg Foundation.}
\author{Martin Tancer}{Department of Applied Mathematics, Charles University,
Malostransk\'{e} n\'{a}m.
25, 118~00~~Praha~1, Czech Republic}{`my surname'@kam.mff.cuni.cz}{}{Supported
by the GA\v{C}R grant 19-04113Y.}
\authorrunning{D. Bulavka, A. Goodarzi, M. Tancer}
\keywords{colorful fractional Helly theorem, $d$-collapsible, exterior algebra, $d$-representable}
\theoremstyle{theorem}
\newtheorem{conjecture}[theorem]{Conjecture}
\newcommand{\R}{\mathbb{R}}
\newcommand{\N}{\mathbb{N}}
\newcommand{\Q}{\mathbb{Q}}
\newcommand{\F}{\mathcal{F}}
\newcommand{\KK}{\mathsf{K}}
\newcommand{\LL}{\mathsf{L}}
\newcommand{\PP}{\mathbf{P}}
\DeclareMathOperator{\SPAN}{span}
\DeclareMathOperator{\col}{col}
\newcommand{\avec}{{\mathbf a}}
\newcommand{\bvec}{{\mathbf b}}
\newcommand{\rvec}{{\mathbf r}}
\newcommand{\kvec}{{\mathbf k}}
\newcommand{\nvec}{{\mathbf n}}
\newcommand{\unitvec}{{\mathbf 1}}
\newcommand{\lvec}{\boldsymbol{\ell}}
\newcommand{\tvec}{{\mathbf t}}
\newcommand{\lip}{{\llcorner}}
\newcommand{\bbeta}{\boldsymbol{\beta}}
\newcommand{\eepsilon}{\boldsymbol{\varepsilon}}
\newif\ifcmnts
\newcommand{\marrow}{\marginpar{\boldmath$\longleftarrow$}}
\newcommand{\martin}[1]{\ifhmode\newline\fi\marrow
  \textsf{\textcolor{green}{\bf
MARTIN:} #1\newline}}
\newcommand{\afshin}[1]{\ifhmode\newline\fi\marrow
  \textsf{\textcolor{green}{\bf
      Afshin:} #1\newline}}
\newcommand{\denys}[1]{\ifhmode\newline\fi\marrow
  \textsf{\textcolor{green}{\bf
      Denys:} #1\newline}}
\newcommand{\martin}[1]{}
\newcommand{\afshin}[1]{}
\newcommand{\denys}[1]{}
\newcommand{\blue}[1]{\begingroup\color{blue}#1\endgroup}
\begin{document}

\maketitle

\begin{abstract}
The well known fractional Helly theorem and colorful Helly theorem can
be merged into the so called colorful fractional Helly theorem. It states: for every
$\alpha \in (0, 1]$ and every non-negative integer $d$, there is $\beta_{\col}
  = \beta_{\col}(\alpha, d)
\in (0, 1]$ with the following property. Let $\mathcal{F}_1, \dots, \mathcal{F}_{d+1}$ be finite nonempty
families of convex sets in $\mathbb{R}^d$ of sizes $n_1, \dots, n_{d+1}$ respectively. If at
least $\alpha n_1 n_2 \cdots n_{d+1}$ of the colorful $(d+1)$-tuples have a nonempty
intersection, then there is $i \in [d+1]$ such that $\mathcal{F}_i$ contains a subfamily of
size at least $\beta_{\col} n_i$ with a nonempty intersection. (A colorful
  $(d+1)$-tuple is a $(d+1)$-tuple $(F_1, \dots ,
F_{d+1})$ such that $F_i$ belongs to $\mathcal{F}_i$ for every $i$.)

The colorful fractional Helly theorem was first stated and proved
by Bárány, Fodor, Montejano, Oliveros, and Pór in 2014 with $\beta_{\col} = \alpha/(d+1)$.
In 2017 Kim proved the theorem with better function $\beta_{\col}$, which in particular
tends to $1$ when $\alpha$ tends to $1$. Kim also conjectured what is the optimal bound
for $\beta_{\col}(\alpha, d)$ and provided the upper bound example for the optimal bound.
The conjectured bound coincides with the optimal bounds for the (non-colorful)
fractional Helly theorem proved independently by Eckhoff and Kalai around 1984.

We verify Kim's conjecture by extending Kalai's approach to the colorful scenario.
Moreover, we obtain optimal bounds also in more general setting when we allow
  several sets of the same color. 
%  for other collections of sets, not just 
%colorful $(d+1)$-tuples.
\end{abstract}

\section{Introduction}
 The target of this paper is to provide optimal bounds for the colorful fractional
 Helly theorem first stated by B\'{a}r\'{a}ny, Fodor, Montejano, Oliveros,
 and~P\'{o}r~\cite{barany-fodor-montejano-oliveros-por14}, and then improved by
 Kim~\cite{kim17}. In order to explain the colorful fractional
  Helly theorem, let us briefly survey the preceding results.

%\subsection{Colorful fractional Helly theorem}

The starting point, as usual in this context, is the Helly
theorem:

\begin{theorem}[Helly's theorem~\cite{helly23}]
  \label{t:helly}
  Let $\F$ be a finite family of at least $d+1$ convex sets in $\R^d$. Assume that every
  subfamily of $\F$ with exactly $d+1$ members has a nonempty intersection.
  Then all sets in $\F$ have a nonempty intersection.
\end{theorem}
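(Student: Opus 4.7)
The plan is to proceed by induction on $n = |\F|$, with Radon's theorem as the main tool. The base case $n = d+1$ is immediate: the hypothesis that every $(d+1)$-subfamily intersects says exactly that $\bigcap \F \neq \emptyset$, since $\F$ itself is the only such subfamily.

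For the inductive step, suppose $n \geq d+2$ and that the theorem holds for every family of $n-1$ convex sets in $\R^d$ satisfying the $(d+1)$-wise intersection property. Write $\F = \{C_1, \dots, C_n\}$. For each $i \in \{1,\dots,n\}$, the family $\F \setminus \{C_i\}$ still has at least $d+1$ members and inherits the $(d+1)$-wise intersection property from $\F$, so the induction hypothesis yields a point $p_i \in \bigcap_{j \neq i} C_j$.

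Now I have $n \geq d+2$ points $p_1,\dots,p_n$ in $\R^d$, so Radon's theorem applies: there is a partition of the index set into nonempty blocks $A$ and $B$ such that $\conv\{p_i : i \in A\} \cap \conv\{p_i : i \in B\}$ contains some point $p$. I claim $p$ lies in every $C_k$, which finishes the proof. Indeed, fix $k$ and assume without loss of generality $k \in A$ (the case $k \in B$ is symmetric). Then for every $i \in B$ we have $i \neq k$, so by construction $p_i \in C_k$. Since $C_k$ is convex, $\conv\{p_i : i \in B\} \subseteq C_k$, and in particular $p \in C_k$.

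The main obstacle is having Radon's theorem in hand, but it is standard and follows in a couple of lines from the fact that any $n \geq d+2$ points in $\R^d$ admit a nontrivial affine dependence; splitting the dependence into its positive and negative parts produces the required partition. Everything else is a clean induction, so no further technical difficulty is expected.
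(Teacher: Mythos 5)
Your proof is correct: it is the classical induction-plus-Radon argument, and every step (the base case, the inductive construction of the points $p_i$, and the use of convexity to place the Radon point in each $C_k$) is sound. The paper does not prove this statement at all — Helly's theorem is only cited as classical background — so there is nothing to compare against; your argument coincides with the standard textbook proof.
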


Helly's theorem admits numerous extensions and two of them, important in our
context, are the fractional Helly theorem and the colorful Helly theorem. The
fractional Helly theorem of Katchalski and Liu covers the case when only some fraction of the $d+1$
tuples in $\F$ has a nonempty intersection.

\begin{theorem}[The fractional Helly theorem~\cite{katchalski-liu79}]
  \label{t:fh}
  For every $\alpha \in (0, 1]$ and every non-negative integer $d$, there is
  $\beta = \beta(\alpha, d) \in (0, 1]$ with the following property. Let 
  $\F$ be a finite family of $n \geq d+1$ convex sets in $\R^d$ such that at least
  $\alpha \binom{n}{d+1}$ of the subfamilies of $\F$ with exactly $d+1$ members
  have a nonempty intersection. Then there is a subfamily of $\F$ with at least
  $\beta n$ members with a nonempty intersection.
\end{theorem}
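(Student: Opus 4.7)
The plan is to pass from convex geometry to a purely combinatorial statement via the nerve and then to invoke Kalai's upper bound theorem for $d$-Leray complexes. This route from around 1984 yields the optimal $\beta$ and is precisely the one the present paper announces it will extend to the colorful setting.

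First I would form the nerve $N = N(\F)$, the simplicial complex on vertex set $\F$ whose faces are the subfamilies of $\F$ with nonempty common intersection. Helly's theorem (\autoref{t:helly}) says exactly that a subfamily $\sigma \subseteq \F$ has empty intersection iff some $(d+1)$-subset of $\sigma$ already does; equivalently, every minimal non-face of $N$ has at most $d+1$ vertices, so $N$ is $d$-Leray (and in fact $d$-collapsible, by Wegner). The hypothesis of the theorem becomes the clean combinatorial inequality
\[
 f_d(N) \;\geq\; \alpha\binom{n}{d+1},
\]
and the desired conclusion is that $N$ contains a simplex of dimension at least $\lceil \beta n \rceil - 1$.

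The core step is then Kalai's upper bound theorem for $d$-Leray complexes: for any such complex $K$ on $n$ vertices, the assumption $f_d(K) \geq \alpha\binom{n}{d+1}$ forces $K$ to contain a simplex on at least $\lceil \beta n \rceil$ vertices, with the optimal $\beta = 1-(1-\alpha)^{1/(d+1)}$. Morally, the $f$-vector of a $d$-Leray complex of clique number $k$ is majorized by that of a disjoint union of a full $(k-1)$-simplex and the $(d-1)$-skeleton of a simplex on the remaining vertices, so many top-dimensional faces can only be ``paid for'' by a large clique. I would prove this via algebraic (symmetric) shifting, which preserves $d$-Lerayness and reduces it to the concrete statement that every minimal non-face of the shifted complex has size at most $d+1$; the $f$-vector identity for a shifted complex can then be manipulated to pull out a full simplex on $\lceil \beta n \rceil$ vertices. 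This shifting/$f$-vector step is the main obstacle; everything else is either Helly's theorem or translation. Once the large simplex in $N(\F)$ is produced, it corresponds by construction to a subfamily of $\F$ of the claimed size with nonempty common intersection, finishing the proof. (If one is willing to accept a suboptimal $\beta$, the upper bound theorem can be bypassed by the more elementary iterated-Helly argument of Katchalski and Liu: average to find a set $F^\star \in \F$ contained in many intersecting $(d+1)$-tuples, pass to a point $p$ in a heavy subconfiguration, and apply Helly inside $\{F \in \F : p \in F\}$; but the resulting constant is exponentially worse.)
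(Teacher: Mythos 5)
Your outline is essentially the Eckhoff--Kalai--Alon route to the \emph{optimal} fractional Helly theorem, and it is sound at the level of detail given; note, though, that the paper does not prove Theorem~\ref{t:fh} (it cites Katchalski--Liu and, for the optimal constant, Eckhoff and Kalai), so the fair comparison is with the machinery the paper actually builds. There the route is Kalai's exterior-algebra argument, not shifting: Theorem~\ref{t:c-k-p} with $c=1$ (applied to the $d$-collapsible nerve, via Wegner) is exactly the upper bound theorem you invoke, proved by intersecting the ``face space'' $W_{\kvec}$ with an annihilator subspace $A_{\kvec}$ and using compound matrices along a special $d$-collapsing sequence. Your alternative --- algebraic shifting plus the $f$-vector bound for shifted $d$-Leray complexes (Theorem~\ref{t:optimal-fractional-Helly}) --- is the Alon--Kalai--Matou\v{s}ek--Meshulam route sketched in the paper's appendix; it buys the more general $d$-Leray hypothesis, but the shifting/Cohen--Macaulay step you defer is precisely the part the paper describes as nontrivial (and as the obstacle to the colorful generalization), whereas the exterior-algebra proof works directly with $d$-collapsibility and colors. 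The translation steps (nerve, $f_d(N)\geq\alpha\binom{n}{d+1}$, extracting $\beta$ from the upper bound theorem) are correct, and since Theorem~\ref{t:fh} only asks for \emph{some} $\beta\in(0,1]$, even the crude bookkeeping suffices.

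One genuine logical slip: ``every minimal non-face of $N$ has at most $d+1$ vertices, so $N$ is $d$-Leray'' is a false implication in general. The boundary complex of the $(d+1)$-dimensional cross-polytope has all minimal non-faces of size $2$, yet it is a $d$-sphere and hence not $d$-Leray. The small-minimal-non-face property of the nerve is what Helly's theorem (Theorem~\ref{t:helly}) gives you, but $d$-Lerayness (indeed $d$-collapsibility) of nerves of convex sets comes from Wegner's geometric argument, which you correctly cite in the parenthesis --- so the conclusion stands, but the stated reason for it does not. (For \emph{shifted} complexes the equivalence you later use --- small minimal non-faces iff $d$-Leray --- is true, but that is a special feature of shiftedness, not a general fact.)
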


An interesting aspect of the fractional Helly theorem is not only to show the
existence of $\beta(\alpha, d)$ but also to provide the largest value of
$\beta(\alpha, d)$ with which the theorem is valid. This has been resolved
independently by Eckhoff~\cite{eckhoff85} and by Kalai~\cite{kalai84}  showing
that the fractional Helly theorem holds with $\beta(\alpha, d) = 1 -
(1-\alpha)^{1/(d+1)}$; yet another simplified proof of this fact has been
subsequently given by Alon and Kalai~\cite{alon-kalai85}. It is well known that this bound is sharp by
considering a family $\F$ consisting of $\approx (1 -
(1-\alpha)^{1/(d+1)})n$ copies of $\R^d$ and $\approx (1-\alpha)^{1/(d+1)}n$
hyperplanes in general position; see, e.g., the introduction of~\cite{kalai84}.
%(There is an easy construction, which we will discuss
%later on, showing that $\beta(\alpha, d)$ cannot be improved beyond this
%bound.)
%\martin{The remark on `easy construction' will need a small modification---I
%will try to do in the next round.}

The colorful Helly theorem of Lov\'{a}sz covers the case where the sets are
colored by $d+1$ colors and only the `colorful' $(d+1)$-tuples of sets in $\F$ are
considered. Given families $\F_1, \dots, \F_{d+1}$ of sets in $\R^d$ a family
of sets $\{F_1, \dots, F_{d+1}\}$ is a \emph{colorful $(d+1)$-tuple} if $F_i
\in \F_i$ for $i \in [d+1]$, where $[n] :=\{1,\dots ,n\}$ for a non-negative integer $n\geq 1$. (The reader may think of $\F$ from preceding
theorems decomposed into color classes $\F_1, \dots, \F_{d+1}$.)

\begin{theorem}[The colorful Helly theorem~\cite{lovasz74, barany82}]
  \label{t:ch}
  Let $\F_1, \dots, \F_{d+1}$ be finite nonempty families of convex sets in
  $\R^d$. Let us assume that every colorful $(d+1)$-tuple has a nonempty
  intersection.
%  for every $F_1 \in \F_1, \dots, F_{d+1} \in
%  \F_{d+1}$ the intersection $F_1 \cap \cdots \cap F_{d+1}$ is nonempty. 
  Then
  one of the families $\F_1, \dots, \F_{d+1}$ has a nonempty intersection.
\end{theorem}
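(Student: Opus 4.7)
The plan is to deduce the theorem from the \emph{Colorful Carathéodory theorem} of Bárány, which asserts that if $X_1, \dots, X_{d+1} \subseteq \R^d$ are finite point sets each containing the origin in its convex hull, then there is a rainbow selection $(x_1, \dots, x_{d+1})$ with $x_i \in X_i$ and $0 \in \conv\{x_1, \dots, x_{d+1}\}$.

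I would argue by contradiction. Assume that $\bigcap_{F \in \F_j} F = \emptyset$ for every $j \in [d+1]$; the goal is to produce a colorful $(d+1)$-tuple with empty intersection, contradicting the hypothesis. The link between ``empty intersection of convex sets'' and ``the origin lies in a convex hull'' is provided by a separation/Farkas-type encoding. For each color $j$ and each $F \in \F_j$ I would associate a point $v_F \in \R^d$, built from outward normals of separating hyperplanes of $F$ with respect to a fixed reference point, so that two properties hold: (a) $\bigcap \F_j = \emptyset$ is equivalent to $0 \in \conv\{v_F : F \in \F_j\}$; (b) for any colorful selection $F_i \in \F_i$, the condition $0 \in \conv\{v_{F_1}, \dots, v_{F_{d+1}}\}$ forces $F_1 \cap \dots \cap F_{d+1} = \emptyset$.

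Once this encoding is in place the contradiction comes in one line: property (a) together with the contradiction hypothesis gives $0 \in \conv\{v_F : F \in \F_j\}$ for every $j$; the Colorful Carathéodory theorem extracts a rainbow tuple with $0$ in its convex hull; and property (b) turns this rainbow tuple into a colorful $(d+1)$-tuple with empty intersection, contradicting the assumption of Theorem~\ref{t:ch}.

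The main obstacle is building the encoding so that both directions of the equivalence hold simultaneously. A convenient route is Sarkaria's tensor trick, which lifts each halfspace appearing in a description of some $F$ to a vector in $\R^{(d+1)^2}$ and rewrites ``the chosen sets admit a common point'' as a linear statement about convex hulls; combined with Theorem~\ref{t:helly} applied to reduce each $F$ to finitely many defining halfspaces, this makes the encoding fully explicit. A more classical alternative is Bárány's original argument, which fixes a reference point, picks a colorful tuple whose intersection minimizes the distance to that point, and then uses a separating hyperplane at the nearest witness to produce a colorful tuple with strictly smaller witness distance unless one of the color classes already has a common intersection.
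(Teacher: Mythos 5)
The first of your two routes is not salvageable as stated: an assignment $F \mapsto v_F \in \R^d$ satisfying simultaneously the forward direction of (a) and property (b) does not exist in general. Take $d=1$, $\F_1 = \{[0,1],\,[2,3]\}$ and $\F_2 = \{[1,2]\}$. Since $[0,1]\cap[2,3]=\emptyset$, property (a) forces $0\in\conv\{v_{[0,1]},v_{[2,3]}\}$, so these two reals lie on opposite closed sides of $0$. But both colorful pairs have nonempty intersection, so the contrapositive of (b) forces $v_{[1,2]}$ to have the same strict sign as $v_{[0,1]}$ and also as $v_{[2,3]}$ --- impossible. The polarity that makes (a) and (b) hold together, namely $\bigcap_i\{x:\langle a_i,x\rangle\le -1\}=\emptyset \iff 0\in\conv\{a_i\}$, is available only when each set is a \emph{single} halfspace missing a common reference point; a general convex set dualizes to a whole family of normal vectors, and then one point per set cannot carry enough information. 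Sarkaria's lift does not repair the mismatch either: in $\R^{(d+1)^2}$ the colorful Carath\'eodory theorem needs $(d+1)^2+1$ colour classes and returns a rainbow tuple of that size, which has no decoding as a colorful $(d+1)$-tuple of your original families.

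Your ``classical alternative'' is essentially Lov\'asz's actual proof, but as written it omits the step that makes it work and has the optimization reversed. After replacing each $F$ by the convex hull of the finitely many witness points of the colorful tuples containing it (so that all sets are compact and nearest points exist and are unique), one fixes $q$ and chooses a colorful tuple $T^*$ \emph{maximizing} the distance from $q$ to $\bigcap T^*$; minimizing gives nothing. The indispensable lemma, absent from your sketch, is the LP-type fact that the nearest point (or lexicographic minimum) of an intersection of $d+1$ convex sets in $\R^d$ is already the nearest point of the intersection of some $d$ of them. Only then can the freed colour $i$ be exchanged: for any $F\in\F_i$ the tuple $T'=(T^*\setminus\{F_i^*\})\cup\{F\}$ satisfies $\bigcap T'\subseteq\bigcap(T^*\setminus\{F_i^*\})$, so its distance to $q$ is at least that of $T^*$, hence equal by maximality, hence (by uniqueness of nearest points) its nearest point is the same point $y$, giving $y\in\bigcap\F_i$. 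Without that exchange lemma there is no contradiction to extract, so as it stands the proposal has a genuine gap. For comparison, the paper never proves Theorem~\ref{t:ch} in isolation: it recovers it as the case $\alpha=1$ of Theorem~\ref{t:cfh_optimal_convex}, which in turn comes out of the exterior-algebra argument behind Theorem~\ref{t:c-k-p}.
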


Both the colorful Helly theorem and the fractional Helly theorem with optimal
bounds imply the Helly theorem. The colorful one by setting $\F_1 = \cdots =
\F_{d+1} = \F$ and the fractional one by setting $\alpha = 1$ giving $\beta(1,
d) = 1$.

The preceding two theorems can be merged into the following colorful
fractional Helly theorem:

\begin{theorem}[The colorful fractional Helly
  theorem~\cite{barany-fodor-montejano-oliveros-por14}]
\label{t:cfh}
  For every $\alpha \in (0, 1]$ and every non-negative integer $d$, there is
  $\beta_{\col} = \beta_{\col}(\alpha, d) \in (0, 1]$ with the following property.
  Let $\F_1, \dots, \F_{d+1}$ be finite nonempty families of convex sets in
  $\R^d$ of sizes $n_1, \dots, n_{d+1}$ respectively. If at least $\alpha n_1
  \cdots n_{d+1}$ of the colorful $(d+1)$-tuples have a nonempty intersection,
  then there is $i \in [d+1]$ such that $\F_i$ contains a subfamily of size at
  least $\beta_{\col} n_i$ with a nonempty intersection. 
\end{theorem}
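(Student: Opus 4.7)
The plan is to extend Kalai's approach to the (non-colorful) fractional Helly theorem to the multipartite setting. First, reformulate the problem in terms of the nerve complex $\NN = \NN(\F_1, \dots, \F_{d+1})$ on the disjoint vertex set $V_1 \sqcup \cdots \sqcup V_{d+1}$ with $V_i := \F_i$, whose faces are the subfamilies with nonempty common intersection. Under this encoding, colorful $(d+1)$-tuples with nonempty intersection correspond to ``colorful'' $d$-faces of $\NN$ (one vertex from each color class), and a large subfamily of $\F_i$ with nonempty common intersection corresponds to a large ``monochromatic'' face of color $i$. By Wegner's theorem, $\NN$ is $d$-collapsible. Thus Theorem~\ref{t:cfh} reduces to a purely combinatorial claim: in any $d$-collapsible simplicial complex $\KK$ on a partitioned vertex set $V_1 \sqcup \cdots \sqcup V_{d+1}$, the number of colorful $d$-faces is at most $\prod_{i=1}^{d+1} n_i - \prod_{i=1}^{d+1} (n_i - m_i)$, where $m_i$ denotes the size of the largest monochromatic face of color~$i$. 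Feeding in $m_i < \beta_{\col} n_i$ then forces the colorful count to be strictly less than $\bigl(1 - (1-\beta_{\col})^{d+1}\bigr)\prod_i n_i$, so choosing $\beta_{\col} = 1 - (1-\alpha)^{1/(d+1)}$ matches Kim's conjecture and the Eckhoff--Kalai bound in the uncolored case.

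The heart of the argument will be a colorful analogue of the Kalai--Eckhoff upper bound theorem for $d$-collapsible complexes. In the uncolored case, Kalai's proof passes through exterior algebra shifting, which replaces $\KK$ by a combinatorially shifted complex with the same $f$-vector whose top face numbers can be bounded directly by Kruskal--Katona-type estimates. The plan is to replace this with a \emph{multipartite} exterior shifting that respects the partition $V_1 \sqcup \cdots \sqcup V_{d+1}$: one works in the exterior algebra multigraded by the block decomposition and uses generic linear substitutions acting on each color block $V_i$ separately. The resulting shifted complex should be multipartite, remain $d$-collapsible in an appropriate sense, and not increase the largest monochromatic face of each color. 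On such shifted complexes the target estimate on the number of colorful $d$-faces should follow directly from their combinatorial structure, mirroring the Kruskal--Katona step in the uncolored proof.

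The main obstacle will be this shifting step: setting up multipartite exterior shifting so that (i) $d$-collapsibility of $\KK$ is inherited by the shifted complex in a form usable simultaneously for each color block, and (ii) one extracts the sharp product expression $\prod_i n_i - \prod_i (n_i - m_i)$ rather than a weaker estimate summed over sub-partitions. Once this colored shifting estimate is in place, the contrapositive immediately yields Theorem~\ref{t:cfh}, and the same machinery should also deliver the stronger version announced in the abstract with arbitrary multiplicities of each color, essentially by allowing blocks $V_i$ to contribute more than one vertex to a colorful tuple.
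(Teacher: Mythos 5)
Your reduction is exactly the one the paper uses: pass to the nerve, invoke Wegner's theorem to get $d$-collapsibility, and reduce the whole statement to the combinatorial bound that a $d$-collapsible complex on $N_1\sqcup\cdots\sqcup N_{d+1}$ with no monochromatic face of color $i$ larger than $r_i$ has at most $\prod_i n_i - \prod_i(n_i-r_i)$ colorful $d$-faces (this is Kim's refined conjecture, Proposition~\ref{p:kim_collapsible}); your arithmetic deducing the theorem from that bound with $\beta_{\col}=1-(1-\alpha)^{1/(d+1)}$ is also correct, and a fortiori gives the existence statement of Theorem~\ref{t:cfh}.

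The genuine gap is that the heart of the argument --- the colorful upper bound for $d$-collapsible complexes --- is not proved but only announced as something that ``should follow'' from a multipartite exterior shifting whose key properties (preservation of $d$-collapsibility in a usable form, non-increase of the largest monochromatic faces, and extraction of the sharp product bound) you yourself flag as the main obstacle. This is precisely the hard step, and the shifting route you sketch is not known to work: the paper explicitly avoids it, noting in its discussion of the $d$-Leray case that colorful algebraic shifting (Babson--Novik) does not combine well with the Alexander-duality argument because duals of balanced complexes need not be balanced. What the paper does instead is closer to Kalai's original 1984 argument than to shifting: it fixes a block-diagonal generic orthogonal matrix $A$ (one generic block per color class $N_i$), defines the subspace $W_{\kvec}$ spanned by the $e_S$ for colorful faces $S\in\KK$ and the subspace $A_{\kvec}$ of elements annihilated by all left interior products $g_T\lip{}\cdot$ with $T\in\binom{R}{k-d}$, and shows $A_{\kvec}\cap W_{\kvec}=\{0\}$ by walking along a \emph{special} $d$-collapsing sequence of $\KK$ and using the linear independence of columns of compound matrices $C_{k-d}(A_{R|M\setminus L})$, which holds because each block $A_{R_i|(M\setminus L)\cap N_i}$ is generic and the dimension hypothesis $\dim\KK[N_i]\le r_i-1$ keeps each block from having too many columns. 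A dimension count then gives $f_{\kvec}(\KK)\le p_{\kvec}(\nvec,d,\rvec)$, which specializes to the product bound for $\kvec=\unitvec$. If you want to complete your proof, you should either supply this direct argument or substantiate the shifting claims, which as stated remain conjectural.
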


B\'{a}r\'{a}ny et al. proved the colorful fractional Helly theorem with the
value $\beta_{\col}(\alpha, d) = \frac{\alpha}{d+1}$ and they used it
as a lemma~\cite[Lemma~3]{barany-fodor-montejano-oliveros-por14} in a proof of
a colorful variant of a $(p,q)$-theorem. Despite this, the optimal bound for
$\beta_{\col}$ seems to be of independent interest. In particular, the bound on
$\beta_{\col}$ has been subsequently improved by Kim~\cite{kim17} who showed
that the colorful fractional Helly theorem is true with $\beta_{\col}(\alpha,
d) =
\max\{\frac{\alpha}{d+1}, 1 - (d+1)(1-\alpha)^{1/(d+1)}\}$. On the other hand,
the value of $\beta_{\col}(\alpha, d)$ cannot go beyond $1 - (1 -
\alpha)^{1/(d+1)}$ because essentially the same example as for the standard
fractional Helly theorem applies in this setting as well---it is
sufficient to set $n_1 = n_2 = \cdots = n_{d+1}$ and take $\approx (1 -
(1-\alpha)^{1/(d+1)})n_i$ copies of $\R^d$ and $\approx (1-\alpha)^{1/(d+1)}n_i$
hyperplanes in general position in each color class.\footnote{At the end of
Section~\ref{s:mixed_colorful} we discuss this example in full detail in more
general context. However, in this special case, it is perhaps much easier to
check directly that $\beta_{\col}$ cannot be improved due to this example.} (Kim~\cite{kim17}
provides a slightly different upper bound example showing the same bound.)
%\martin{I have modified a bit the sentences above, because the previous version
%was incomplete.}

Coming back to the lower bound on $\beta_{\col}(\alpha,d)$, Kim explicitly
conjectured that $1 - (1 - \alpha)^{1/(d+1)}$ is also a lower bound, thereby an
optimal bound for the colorful fractional Helly theorem. He also provides a
more refined conjecture, that we discuss slightly later on (see
Conjecture~\ref{c:kim_refined}), which implies
this lower bound. We prove the refined conjecture, and therefore
the optimal bounds for the colorful fractional Helly theorem.

\begin{theorem}[The optimal colorful fractional Helly theorem]
\label{t:cfh_optimal_convex}
%  theorem~\cite{barany-fodor-montejano-oliveros-por14}]
%  For every $\alpha \in (0, 1]$ and every non-negative integer $d$, there is
%  $\beta_{\col} = \beta_{\col}(\alpha, d) \in (0, 1]$ with the following property.
  Let $\F_1, \dots, \F_{d+1}$ be finite nonempty families of convex sets in
  $\R^d$ of sizes $n_1, \dots, n_{d+1}$ respectively. If at least $\alpha n_1
  \cdots n_{d+1}$ of the colorful $(d+1)$-tuples have a nonempty intersection,
  for $\alpha \in (0, 1]$, 
  then there is $i \in [d+1]$ such that $\F_i$ contains a subfamily of size at
  least $(1 - (1-\alpha)^{1/(d+1)}) n_i$ with a nonempty intersection. 
\end{theorem}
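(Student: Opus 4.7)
The plan is to translate the geometric statement into a purely combinatorial inequality about the nerve complex and then prove that inequality by extending Kalai's exterior algebra technique to the multipartite setting.

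\textbf{Step 1 (Reduction).} Let $K$ be the nerve of $\F_1 \sqcup \cdots \sqcup \F_{d+1}$; by Wegner's theorem $K$ is $d$-collapsible, and it carries a natural $(d+1)$-partition of its vertex set $V_1 \sqcup \cdots \sqcup V_{d+1}$ with $|V_i| = n_i$. A colorful $(d+1)$-tuple with nonempty intersection corresponds to a \emph{colorful} $d$-face of $K$ (one vertex per $V_i$), while a subfamily of $\F_i$ with a common point corresponds to a \emph{monochromatic} face inside $V_i$. Let $s_i$ be the largest monochromatic face size in color $i$ and $f^{col}_d(K)$ the number of colorful $d$-faces. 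The theorem follows once we prove the multipartite analog of Kalai's inequality,
\[
f^{col}_d(K) \;\leq\; n_1 n_2 \cdots n_{d+1} \;-\; \prod_{i=1}^{d+1}(n_i - s_i).
\]
Indeed, dividing by $n_1 \cdots n_{d+1}$ yields $\prod_{i=1}^{d+1}(1 - s_i/n_i) \leq 1-\alpha$, and if every $s_i$ were strictly less than $(1-(1-\alpha)^{1/(d+1)})n_i$, the left-hand product would exceed $1-\alpha$, a contradiction. Hence some $s_i$ meets the required bound, which translates back to a subfamily of $\F_i$ with common point.

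\textbf{Step 2 (Proof of the key inequality).} To prove the displayed inequality I would work inside the exterior face algebra of $K$, viewed as a multigraded module whose grading is refined by color. For each $i$, pick a generic linear form $\theta_i$ supported on $V_i$; the $d$-collapsibility (equivalently, the $d$-Leray property) of $K$ then ensures that suitable wedge products of the $\theta_i$'s induce multiplication maps whose kernels and cokernels can be controlled. A dimension count produces an upper bound on the number of colorful $d$-faces, and the correction term $\prod(n_i - s_i)$ arises from monomials supported on the complements of the largest monochromatic faces, where the generic forms can be adjusted to vanish. Tightness is witnessed by the hyperplane example mentioned in the introduction.

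\textbf{Main obstacle.} Step 2 is the crux. In the non-colorful case, Kalai's argument takes place in a single exterior algebra and the correction term $\binom{n-s}{d+1}$ appears as the dimension of a single graded piece. In the colorful case the correction term is a \emph{product} of $(n_i - s_i)$'s, reflecting independent contributions from each color and forcing us into a multigraded setting with separate generic forms per color. The delicate point is to show that $d$-collapsibility supplies enough algebraic flexibility for these independent choices to coexist and for the resulting multiplication maps to have the expected ranks; a naive application of single-sort algebraic shifting would only recover a sum-type bound that is weaker than what is needed.
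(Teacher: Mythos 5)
Your Step 1 is correct and coincides with the paper's own reduction: the displayed inequality is exactly Kim's refined conjecture (Proposition~\ref{p:kim_collapsible} in the paper, with your $s_i$ playing the role of $r_i$), and the derivation of the theorem from it by the product estimate $\prod_i(1-s_i/n_i)\leq 1-\alpha$ is the same short contradiction argument the paper uses. The problem is that Step 2, which is the entire mathematical content of the result, is not a proof but a plan, and you yourself flag its central difficulty (``the delicate point is to show that $d$-collapsibility supplies enough algebraic flexibility\ldots'') without resolving it. What the paper actually does there is the following. Fix sets $R_i\subseteq N_i$ with $|R_i|=r_i$ containing nothing in particular, set $R=R_1\sqcup\cdots\sqcup R_{d+1}$, and choose a block-diagonal transition matrix $A$ whose block $A_{N_i|N_i}$ turns $(e_j)_{j\in N_i}$ into an orthonormal basis $(g_j)_{j\in N_i}$ with every square submatrix of full rank. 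Let $W$ be the span of the $e_S$ over colorful faces $S\in\KK$ and let $A_{\kvec}$ be the common kernel of the interior products $g_T\lip(\cdot)$ over $T\in\binom{R}{k-d}$; one checks $\dim A_{\kvec}\geq \big|\binom{N}{\kvec}\big|-p_{\kvec}$ because $g_S\in A_{\kvec}$ whenever $S$ meets the complement of $R$ in more than $d$ elements. The crux is $A_{\kvec}\cap W=\{0\}$, and this is proved by taking a \emph{special} $d$-collapsing sequence (Kalai's Lemma~3.2), locating the first collapse that kills a colorful face with nonzero coefficient, and turning the vanishing of $\langle e_L, g_T\lip m\rangle$ into a nontrivial linear dependence among the columns of the compound matrix $C_{k-d}(A_{R|M\setminus L})$ — which is impossible because the block structure and the genericity of each block force those columns to be independent (here $|R_i|\geq |(M\setminus L)\cap N_i|$ uses the dimension hypothesis). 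None of this mechanism — the interior products indexed by $\binom{R}{k-d}$, the use of the collapsing \emph{sequence} rather than a homological condition, the compound-matrix independence — appears in your sketch, so the key inequality remains unproved.

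A secondary but genuine error: you write ``$d$-collapsibility (equivalently, the $d$-Leray property).'' These are not equivalent; $d$-collapsible implies $d$-Leray but not conversely, and the distinction matters here. The paper's argument consumes the collapsing sequence itself, step by step, and the extension of the statement to $d$-Leray complexes is explicitly left as an open conjecture in the paper (with a discussion of why the natural algebraic-shifting route does not go through). So an argument that only invokes the Leray property, as your Step 2 suggests, would be attempting something strictly harder than what is known.
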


In the proof we follow the exterior algebra approach which has been used by
Kalai~\cite{kalai84} in order to provide optimal bounds for the standard
fractional Helly theorem. We have to upgrade Kalai's proof to the colorful
setting. This requires guessing the right generalization of several steps in
Kalai's proof (in particular guessing the statement of Theorem~\ref{t:c-k-p}
below). However, we honestly admit that after making these `guesses' we
follow Kalai's proof quite straightforwardly.
%\martin{I was wondering what to write into the paragraph above. I think that we
%should honestly point out that we follow Kalai straightforwardly. But I also
%wanted to hint that there was still some work to do. Any opinion is welcome.}

%\martin{Again the paragraph is questionable. It could help us to sell the
%result and also to explain the difficulties with the Leray setting. But at
%least, we should check that Eckhoff's approach does not generalize easily. Any
%opinion is again welcome.}
%\martin{Update: Now I would rather leave out the paragraph above at least for
%the arXiv version. The reason is that I am not very confident about it. After
%reading a bit more about shifting, I think that we are doing exterior shifting
%in our proof with respect to a certain special matrix (not completely generic
%one).}

Let us also compare one aspect of our proof with the previous proof of the
weaker bound by Kim~\cite{kim17}: Kim's proof uses the colorful Helly theorem
as a blackbox while our proof includes the proof of the colorful Helly theorem.

Last but not least, the exterior algebra approach actually allows to generalize
Theorem~\ref{t:cfh_optimal_convex} in several different directions. The
extension to so called $d$-collapsible complexes is essentially mandatory for
the well working proof while the other generalizations that we will present
just follow from the method. We will discuss this in detail in forthcoming
subsections of the introduction.

\subsection{$d$-representable and $d$-collapsible complexes}

\iffull\paragraph{The nerve and $d$-representable complexes.} 
\else\subparagraph{The nerve and $d$-representable complexes.}
\fi
The important information
in Theorems~\ref{t:helly},~\ref{t:fh},~\ref{t:ch},~\ref{t:cfh},
and~\ref{t:cfh_optimal_convex} is which subfamiles have a nonempty intersection.
This information can be efficiently stored in a simplicial complex called the nerve.

A \emph{(finite abstract) simplicial complex} is a set system $\KK$ on a finite
\emph{set of vertices} $N$ such that whenever $A \in \KK$ and $B \subseteq A$, then
$B \in \KK$. (The standard notation for the vertex set would be $V$ but
this notation will be more useful later on when we will often use capital
letters such as $R$ for some set and the corresponding lower case letters such
as $r$ for its size.)
The elements of $\KK$ are \emph{faces} (a.k.a. \emph{simplices})
of $\KK$. The \emph{dimension} of a face $A \in \KK$ is defined as $\dim A =
|A|-1$; this corresponds to representing $A$ as an
$(|A|-1)$-dimensional simplex. The dimension of $\KK$, denoted $\dim \KK$, is
the maximum of the dimensions of faces in $\KK$.
A face of dimension $k$ is a
\emph{$k$-face} in short. \emph{Vertices} of $\KK$ are usually identified with
$0$-faces, that is, $v \in N$ is identified with $\{v\} \in \KK$. (Though the definition
of simplicial complex allows that $\{v\} \not\in \KK$ for $v
\in N$, in our applications we will always have $\{v\} \in \KK$ for $v \in N$.) 
Given a family of sets
$\F$, the \emph{nerve} of $\F$, 
%denoted $\NN(\F)$ 
is the simplicial
complex whose vertex set is $\F$ and whose faces are subfamilies with a
nonempty intersection. A simplicial complex is \emph{$d$-representable} if it
is the nerve of a finite family of convex sets in~$\R^d$. 

As a preparation for the $d$-collapsible setting, we now restate
Theorem~\ref{t:cfh_optimal_convex} in terms of $d$-representable complexes. For
this we need two more notions. Given a simplicial complex $\KK$ and a subset $U$
of the vertex set $N$, the \emph{induced subcomplex} $\KK[U]$ is defined as $\KK[U]
:= \{A \in \KK\colon A \subseteq U\}$. Now, let us assume that the
vertex set $N$ is split into $d+1$ pairwise disjoint subsets $N = N_1 \sqcup
\cdots \sqcup N_{d+1}$ (we can think of this partition as coloring each vertex
of $N$ with one of the $d+1$ possible colors). Then a \emph{colorful $d$-face} is a
$d$-face $A$, such that $|A \cap N_i| = 1$ for every $i \in [d+1]$.

\begin{theorem}[Theorem~\ref{t:cfh_optimal_convex} reformulated]
\label{t:cfh_optimal_representable}
 Let $\KK$ be a $d$-representable simplicial complex with the set of vertices
  $N = N_1
  \sqcup \cdots \sqcup N_{d+1}$ divided into $d+1$ disjoint subsets. Let $n_i
  := |N_i|$ for $i \in [d+1]$ and assume that $\KK$ contains
  at least $\alpha n_1 \cdots n_{d+1}$ colorful $d$-faces for some $\alpha \in
  (0, 1]$. Then there is $i \in [d+1]$ such that $\dim \KK[N_i] \geq (1 -
  (1-\alpha)^{1/(d+1)}) n_i - 1$. 
\end{theorem}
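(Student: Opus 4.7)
The plan is to deduce Theorem~\ref{t:cfh_optimal_representable} from a sharp upper bound on the number of colorful $d$-faces of a $d$-collapsible complex, then prove that bound by a multigraded exterior algebra argument extending Kalai's method. First, since every $d$-representable complex is $d$-collapsible (Wegner's theorem), it suffices to prove the conclusion under the weaker hypothesis that $\KK$ is $d$-collapsible; this is both more general and algebraically more tractable.

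Second, I would aim to establish the following colorful upper bound, which is the natural colorful analogue of the Eckhoff--Kalai inequality: for a $d$-collapsible complex $\KK$ on $N = N_1 \sqcup \cdots \sqcup N_{d+1}$, if $r_i := \dim \KK[N_i] + 1$ for each $i \in [d+1]$, then the number of colorful $d$-faces of $\KK$ is at most
\[
 n_1 n_2 \cdots n_{d+1} \;-\; (n_1 - r_1)(n_2 - r_2)\cdots (n_{d+1} - r_{d+1}).
\]
Given this bound, Theorem~\ref{t:cfh_optimal_representable} follows by a short computation: if, for contradiction, $\dim \KK[N_i] < (1-(1-\alpha)^{1/(d+1)})\, n_i - 1$ for every $i$, then $n_i - r_i > (1-\alpha)^{1/(d+1)}\, n_i$, and multiplying the inequalities gives $\prod_i (n_i - r_i) > (1-\alpha)\, \prod_i n_i$, so the upper bound forces strictly fewer than $\alpha\, n_1 \cdots n_{d+1}$ colorful $d$-faces, contradicting the hypothesis.

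The heart of the proof is therefore the colorful upper bound, which I would attack by the exterior algebra technique of Kalai adapted to a $(d+1)$-fold multigrading that records, coordinate-by-coordinate, the color class of each vertex. To every vertex $v \in N_i$ I would assign a \emph{generic} vector lying in the $i$-th graded piece of a suitably chosen exterior algebra; each colorful $d$-face $\sigma$ then produces a wedge $u_\sigma$ sitting in the top multigraded component $(1,\dots,1)$. An elementary $d$-collapse---removing a face $A$ of size at most $d$ contained in a unique maximal face---should translate, after taking wedges, into a linear dependency among the $u_\sigma$'s in the appropriate multigraded component. Iterating through a complete collapsing sequence and carefully tracking which vertices of each color remain ``available'' at the end, one bounds the number of surviving colorful $d$-faces by the dimension of an explicit subspace that coincides with $n_1 \cdots n_{d+1} - \prod_i (n_i - r_i)$.

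The main obstacle will be identifying the correct colorful generalization of Kalai's key algebraic statement (Theorem~\ref{t:c-k-p} in the paper): one must guess the right multigraded exterior algebra, the right genericity condition on the vectors assigned to vertices, and the right ``subspace to collapse into'' so that the resulting dimension count matches the desired product. The authors themselves flag exactly this as the essential leap; once the multigraded algebraic statement is correctly set up, the remaining verification---iterating the collapses, tallying dimensions, and deducing the quantitative bound---should proceed along the same lines as Kalai's original non-colorful argument.
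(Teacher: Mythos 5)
Your proposal follows essentially the same route as the paper: reduce to $d$-collapsible complexes via Wegner's theorem, establish the bound $n_1\cdots n_{d+1} - \prod_i(n_i-r_i)$ on the number of colorful $d$-faces (this is exactly the paper's Proposition~\ref{p:kim_collapsible}, i.e.\ Kim's refined conjecture), and deduce the theorem by the same short computation. The one ingredient you leave as a plan---the correctly multigraded exterior-algebra lemma---is precisely what the paper supplies as Theorem~\ref{t:c-k-p}: it is realized by a block-diagonal transition matrix whose blocks are generic orthogonal matrices on each color class, and by showing that the span $W_{\kvec}$ of the surviving colorful faces intersects trivially the subspace annihilated by the interior products $g_T\lip{}$ for $T\in\binom{R}{k-d}$, using a \emph{special} collapsing sequence to locate the linear dependence among columns of a compound matrix.
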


 Theorem~\ref{t:cfh_optimal_representable} is indeed just a reformulation of
Theorem~\ref{t:cfh_optimal_convex}: Considering $\F$ as disjoint
union\footnote{If there are any repetitions of sets in $\F$, which we generally
allow for families of sets, then each repetition creates a new vertex in
the nerve.} $\F = \F_1 \sqcup \cdots \sqcup \F_{d+1}$, then $\KK$ corresponds to the
nerve of $\F$, colorful $d$-faces correspond to colorful $(d+1)$-tuples with
nonempty intersection and the dimension of $\KK[V_i]$ corresponds to the size of
largest subfamily of $\F_i$ with nonempty intersection minus $1$. (The shift by
minus 1 between size of a face and dimension of a face is a bit unpleasant; however, we want to follow the standard terminology.)

\iffull
\paragraph{$d$-collapsible complexes.}
\else \subparagraph{$d$-collapsible complexes.}
\fi

In~\cite{wegner75} Wegner introduced an important class of simplicial
complexes, called $d$-collapsible complexes. They include all
$d$-representable complexes, which is the main result of~\cite{wegner75},
while they admit quite simple combinatorial description which is useful for
induction.

Given a simplicial complex $\KK$, we say that a simplicial complex $\KK'$ arises
from $\KK$ by an \emph{elementary $d$-collapse}, if there are faces $L, M \in
\KK$ with
the following properties: (i) $\dim L \leq d-1$; (ii) $M$ is the unique
inclusion-wise maximal face which contains $L$; and (iii) $\KK' = \KK \setminus \{A
\in \KK \colon L \subseteq A\}$. A simplicial complex $\KK$ is
\emph{$d$-collapsible} if there is a sequence of simplicial complexes $\KK_0,
\dots, \KK_{\ell}$ such that $\KK = \KK_0$; $\KK_i$ arises from $\KK_{i-1}$ by an
elementary $d$-collapse for $i \in [\ell]$; and $\KK_{\ell}$ is the empty
complex.

%We will make use of a refined version of $d$-collapses introduced by
%Kalai~\cite{kalai84}[Lemma 3.2]. He proved that every elementary $d$-collapse
%can be replaced by a finite sequence of \emph{special elementary d-collapses}
%which can be of two types: (i)removal of a maximal face of dimension less than $d$
%or (ii) removal of a free face of dimension $d-1$ and all the faces that contain it.

We will prove the following generalization of
Theorem~\ref{t:cfh_optimal_representable} (equivalently of
Theorem~\ref{t:cfh_optimal_convex}).

\begin{theorem}[The optimal colorful fractional Helly theorem for
  $d$-collapsible complexes]
\label{t:cfh_optimal_collapsible}
 Let $\KK$ be a $d$-collapsible simplicial complex with the set of vertices $N
  = N_1
  \sqcup \cdots \sqcup N_{d+1}$ divided into $d+1$ disjoint subsets. Let $n_i
  := |N_i|$ for $i \in [d+1]$ and assume that $\KK$ contains
  at least $\alpha n_1 \cdots n_{d+1}$ colorful $d$-faces for some $\alpha \in
  (0, 1]$. Then there is $i \in [d+1]$ such that $\dim \KK[N_i] \geq (1 -
  (1-\alpha)^{1/(d+1)}) n_i - 1$. 
\end{theorem}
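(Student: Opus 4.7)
The plan is to follow Kalai's exterior-algebra strategy, adapted to the colored setting. Set $\beta := 1-(1-\alpha)^{1/(d+1)}$ and suppose for contradiction that $\dim\KK[N_i]< \beta n_i -1$ for every $i\in[d+1]$. Writing $m_i := \dim\KK[N_i]+1$, this gives $n_i-m_i>(1-\beta)n_i$, so
\begin{equation*}
\prod_{i=1}^{d+1}(n_i-m_i)\ >\ (1-\beta)^{d+1}\prod_{i=1}^{d+1} n_i\ =\ (1-\alpha)\prod_{i=1}^{d+1}n_i.
\end{equation*}
Consequently, the theorem will follow once we establish the following colorful analogue of Kalai's inequality: for any $d$-collapsible complex $\KK$ on $N=N_1\sqcup\cdots\sqcup N_{d+1}$ and any integers $m_i$ with $\dim\KK[N_i]< m_i$,
\begin{equation*}
f^{\col}_d(\KK)\ \leq\ \prod_{i=1}^{d+1}n_i \;-\; \prod_{i=1}^{d+1}(n_i-m_i),
\end{equation*}
where $f^{\col}_d(\KK)$ denotes the number of colorful $d$-faces of $\KK$. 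This specializes in spirit to Kalai's bound $f_d\leq\binom{n}{d+1}-\binom{n-m}{d+1}$, and a direct check shows that it is attained by the upper-bound example sketched in the introduction (where $\prod(n_i-m_i)$ counts precisely the colorful $(d+1)$-tuples consisting entirely of hyperplanes).

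To prove this inequality I would pass to the exterior face ring $E_\KK = \bigwedge V / I_\KK$, with $V=V_1\oplus\cdots\oplus V_{d+1}$ split along the color classes and each $V_i$ spanned by $\{e_v : v\in N_i\}$. The induced $\mathbb{Z}^{d+1}$-multigrading identifies the $(1,\dots,1)$-component with the linear span of colorful $d$-faces, so its dimension equals $f^{\col}_d(\KK)$. The central lemma — presumably the paper's Theorem~\ref{t:c-k-p} — ought to be a multigraded Kalai-type injectivity statement: for each color $i$ choose generic linear forms $\theta_{i,1},\dots,\theta_{i,m_i}\in V_i$, and show that left multiplication by a carefully chosen colored wedge product of these forms has kernel of dimension exactly $\prod_i(n_i-m_i)$ on the relevant multigraded piece of $E_\KK$. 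The desired inequality is an immediate dimension count from this.

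The injectivity statement is where $d$-collapsibility enters: the collapsing sequence translates, as in Kalai's original argument, into a filtration of $E_\KK$ in which each elementary $d$-collapse contributes a tracked rank change. Since an elementary $d$-collapse removes a pair $(L,M)$ with $\dim L\leq d-1$ and $M$ the unique facet containing $L$, the generic forms chosen outside the colors supporting $L$ do not interfere, and the rank bookkeeping stays confined to multidegrees touched by $L\subseteq M$. Summing these contributions across the whole collapsing sequence yields the claimed kernel dimension.

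The main obstacle — as the authors themselves flag — will be \emph{guessing} the correct multigraded form of Theorem~\ref{t:c-k-p}: choosing the right colored generic wedge and the right multidegree in which to measure the kernel, so that the bookkeeping across $d$-collapses lands exactly on $\prod(n_i-m_i)$ rather than on some weaker estimate. Once that colored statement is in hand, the proof should follow Kalai's argument closely, and be closed off by the arithmetic reduction of the first paragraph.
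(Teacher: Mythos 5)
Your first two paragraphs reproduce the paper's own derivation of Theorem~\ref{t:cfh_optimal_collapsible} exactly: the inequality you isolate, $f^{\col}_d(\KK)\le\prod_i n_i-\prod_i(n_i-m_i)$ whenever $\dim\KK[N_i]\le m_i-1$, is precisely Proposition~\ref{p:kim_collapsible} (Kim's refined conjecture for $d$-collapsible complexes), and the arithmetic $(1-\beta)^{d+1}=1-\alpha$ producing the contradiction is the paper's argument verbatim. That reduction is correct, and the hyperplane example does attain the bound as you say.

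The gap is that all of the actual content sits inside the ``central lemma'' that you describe but neither state precisely nor prove, and your one-sentence description of it does not yet pin down a workable statement. The paper's Theorem~\ref{t:c-k-p} does not compute the kernel of multiplication by a generic colored wedge ``exactly,'' and it works in $\bigwedge V$ rather than in the quotient ring: it fixes a block-diagonal orthonormal transition matrix $A$ (one block per color class, every square submatrix of every block nonsingular), and considers the subspace $A_{\unitvec}$ of the colorful component consisting of elements annihilated by all interior products $g_t\lip\cdot$ with $t\in R$, where $R=R_1\sqcup\cdots\sqcup R_{d+1}$ and $|R_i|=m_i$. One needs only the lower bound $\dim A_{\unitvec}\ge\prod_i(n_i-m_i)$ (witnessed by the vectors $g_S$ with $S$ colorful and $S\subseteq N\setminus R$), together with the key claim $A_{\unitvec}\cap W=\{0\}$, where $W$ is the span of the $e_S$ over colorful faces $S$ of $\KK$. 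Establishing that trivial intersection is where $d$-collapsibility genuinely enters, and it is not routine bookkeeping: one must first upgrade the collapsing sequence to \emph{special} elementary collapses (\cite[Lemma~3.2]{kalai84}) so that at the step eliminating the first surviving coefficient the minimal face $L$ has size exactly $d$, and then the resulting linear relation among columns of $A_{R|M\setminus L}$ is ruled out blockwise using Lemma~\ref{l:columns} and the hypothesis $|M\cap N_i|\le m_i$. None of this is carried out in your proposal; as it stands it is a correct reduction plus a plan for the hard half, not a proof.
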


\subsection{Kim's refined conjecture and further generalization}
As a tool for a possible proof of Theorem~\ref{t:cfh_optimal_convex},
Kim~\cite[Conjecture~4.2]{kim17} suggested the following conjecture. (The
notation $k_i$ in Kim's statement of the conjecture is our $r_i+1$.)

\begin{conjecture}[\cite{kim17}]
\label{c:kim_refined}
  Let $n_i$ be positive and $r_i$ non-negative integers for $i \in [d+1]$ with $n_i \geq
  r_i + 1$. Let $\F_1, \dots, \F_{d+1}$ be families of convex sets in $\R^d$ such
  that $|\F_i| = n_i$ and there is no subfamily of $\F_i$ of size $r_i + 1$ with
  non-empty intersection for every $i \in [d+1]$. Then the number of colorful $(d+1)$-tuples
  with nonempty intersection is at most 
\[
  n_1 \cdots n_{d+1} - (n_1 - r_1) \cdots(n_{d+1} - r_{d+1}).
\]
\end{conjecture}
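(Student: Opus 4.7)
My plan is to recast Kim's refined conjecture as a statement about $d$-collapsible simplicial complexes and prove it there. Replacing the families $\F_i$ with their disjoint-union nerve $\KK$ and invoking Wegner's theorem that every $d$-representable complex is $d$-collapsible, the statement to prove becomes: if $\KK$ is a $d$-collapsible complex on a vertex set $N = N_1 \sqcup \cdots \sqcup N_{d+1}$ with $|N_i| = n_i$ and $\dim \KK[N_i] \leq r_i - 1$ for each $i$, then the number of colorful $d$-faces of $\KK$ is at most $\prod_i n_i - \prod_i (n_i - r_i)$.

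The key intermediate step I would establish is the following colorful Kalai-type covering lemma: under the hypotheses above, there exist subsets $R_i \subseteq N_i$ with $|R_i| = r_i$ such that every colorful $d$-face of $\KK$ meets $R_1 \cup \cdots \cup R_{d+1}$. Granting this, the desired bound follows by a direct counting argument: the colorful $(d+1)$-tuples avoiding $\bigcup_i R_i$ are exactly the tuples contained in $\prod_i (N_i \setminus R_i)$, of which there are $\prod_i (n_i - r_i)$, and none of them can be a face of $\KK$; subtracting from the total $\prod_i n_i$ of colorful tuples yields the conjectured bound.

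The step I expect to be the main obstacle is proving the covering lemma by upgrading Kalai's exterior algebra machinery to the colorful setting. My plan is to fix an ordering of the vertices in each $N_i$, assign generic vectors to the vertices so that the vectors attached to color class $i$ span a distinguished subspace, and for every $d$-face $F \in \KK$ form the wedge product of the vectors attached to $F$. An elementary $d$-collapse on a pair $(L, M)$ with $\dim L \leq d-1$ then provides a linear relation that rewrites the wedge of $M$ in terms of wedges of other $d$-faces; iterating along a full $d$-collapsing sequence confines the span of these wedges to one indexed by a shifted family of colorful $d$-subsets. The delicate point is to carry out this shifting separately inside each color class, so that the surviving colorful $d$-subsets use, within each $N_i$, only a fixed block of $r_i$ vertices, which will then serve as $R_i$. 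Verifying that a single genericity choice respects all $d+1$ color classes simultaneously, and that the hypothesis $\dim \KK[N_i] \leq r_i - 1$ translates into the correct per-color rank bound, is the technical core; once this is in place, the remaining book-keeping should follow Kalai's template essentially verbatim.
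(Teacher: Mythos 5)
Your reduction to $d$-collapsible complexes and the final counting step are fine, and the exterior-algebra machinery you invoke is the right toolbox, but the intermediate statement you hang everything on --- that there exist $R_i \subseteq N_i$ with $|R_i| = r_i$ such that \emph{every} colorful $d$-face of $\KK$ meets $R_1 \cup \cdots \cup R_{d+1}$ --- is false, already for $d = 1$. Take $\F_1 = \{[0,1],[2,3],[4,5]\}$ and $\F_2 = \{[1,2],[3,4],[5,6]\}$ in $\R^1$; each class is pairwise disjoint, so $r_1 = r_2 = 1$, and there are exactly $5 = n_1 n_2 - (n_1-1)(n_2-1)$ intersecting colorful pairs, forming a path on the six intervals. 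The hitting set you require would be a vertex cover of this $5$-edge path consisting of one vertex from each side; but every vertex of the path has degree at most $2$, so any two vertices cover at most $4$ edges. Thus the numerical bound holds (here with equality) while no such $R_1, R_2$ exist, and the covering lemma cannot serve as the intermediate step.

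The confusion comes from the shifting step: after the exterior-algebra argument (or after algebraic shifting) the surviving colorful sets are indeed supported on initial blocks of size $r_i$ in each color class, but this is a statement about a different, "shifted" object with the same colorful $f$-vector, not about the faces of $\KK$ itself. What survives --- and what the paper proves as Theorem~\ref{t:c-k-p} --- is purely the cardinality inequality $f_{\unitvec}(\KK) \leq p_{\unitvec}(\nvec,d,\rvec) = n_1\cdots n_{d+1} - (n_1-r_1)\cdots(n_{d+1}-r_{d+1})$: one fixes arbitrary $R_i$ of size $r_i$, exhibits a subspace $A_{\unitvec} \subseteq \bigwedge^{\unitvec} V$ of codimension at most $p_{\unitvec}(\nvec,d,\rvec)$, and shows via a special $d$-collapsing sequence and the block-generic basis (exactly the setup you describe) that $A_{\unitvec}$ meets the span $W_{\unitvec}$ of the $e_S$ over colorful faces only in $0$, whence $\dim W_{\unitvec} \leq p_{\unitvec}(\nvec,d,\rvec)$. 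Your plan goes through once the covering lemma is replaced by this dimension bound; as stated, it is a genuine gap.
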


We explicitly prove this conjecture in a slightly more general setting for
$d$-collapsible complexes. (Note that the condition `no subfamily of 
size $r_i + 1$' translates as `no $r_i$-face', that is, `the dimension is at
most $r_i - 1$'.)

\begin{proposition}
\label{p:kim_collapsible}
  Let $n_i$ be positive and $r_i$ non-negative integers for $i \in [d+1]$ with $n_i \geq
  r_i + 1$.
 Let $\KK$ be a $d$-collapsible simplicial complex with the set of vertices $N =
  N_1 \sqcup \cdots \sqcup N_{d+1}$ divided into $d+1$ disjoint subsets. Assume
  that $|N_i| = n_i$ and that $\dim \KK[N_i] \leq r_i - 1$ for every $i \in
  [d+1]$. Then $\KK$ contains at most 
\[
  n_1 \cdots n_{d+1} - (n_1 - r_1) \cdots(n_{d+1} - r_{d+1}).
\]
 colorful $d$-faces.  
\end{proposition}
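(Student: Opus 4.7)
The plan is to adapt Kalai's exterior-algebra method to the colorful setting. For the algebraic setup, for each color $i \in [d+1]$ let $V_i := \R^{N_i}$ be the vector space with basis $\{e_v\}_{v \in N_i}$, and let $T := V_1 \otimes \cdots \otimes V_{d+1}$, of dimension $n_1 \cdots n_{d+1}$. The natural basis $e_A := e_{a_1} \otimes \cdots \otimes e_{a_{d+1}}$ of $T$ is indexed by colorful $d$-tuples $A$, and the colorful $d$-faces of $\KK$ index a subset of this basis whose linear span I denote by $S_\KK \subset T$; in particular, $\dim S_\KK$ equals the number of colorful $d$-faces of $\KK$, which is the quantity to bound.

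Next, I would pick, for each $i$, an $(n_i - r_i)$-dimensional subspace $U_i \subset V_i$ in sufficiently general position, and form $U := U_1 \otimes \cdots \otimes U_{d+1} \subset T$, of dimension $(n_1 - r_1) \cdots (n_{d+1} - r_{d+1})$. The proposition then reduces to the single disjointness statement $U \cap S_\KK = \{0\}$: from $\dim U + \dim S_\KK \le \dim T$ one reads off exactly the bound $\dim S_\KK \le n_1 \cdots n_{d+1} - (n_1-r_1)\cdots(n_{d+1}-r_{d+1})$.

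To prove $U \cap S_\KK = \{0\}$, I would induct on the length $\ell$ of a $d$-collapsing sequence $\KK = \KK_0, \KK_1, \ldots, \KK_\ell = \emptyset$. The case $\ell = 0$ is trivial. For $\ell \ge 1$, let $\KK_1$ arise from $\KK$ by an elementary $d$-collapse at a pair $(L, M)$, with $|L| \le d$ and $M$ the unique inclusion-maximal face of $\KK$ containing $L$. The induced-dimension bounds $\dim \KK_1[N_i] \le \dim \KK[N_i] \le r_i - 1$ and the $d$-collapsibility of $\KK_1$ are both preserved, so by induction $U \cap S_{\KK_1} = \{0\}$. The task is then to show that a hypothetical nonzero vector $x \in U \cap S_\KK$ can be modified, using the algebraic relation forced by the unique-maximum pair $(L, M)$ together with the genericity of the $U_i$, into a nonzero vector of $U \cap S_{\KK_1}$, contradicting the inductive hypothesis.

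The main obstacle, and the step the introduction flags as requiring ``the right guess'', is to formulate and verify the colorful analog of Kalai's pivoting lemma (Theorem~\ref{t:c-k-p} in the paper). Concretely, one must identify the precise genericity condition on $(U_1, \ldots, U_{d+1})$ ensuring that, within $U$, every occurrence of $e_M$ can be rewritten as a linear combination of vectors $e_F$ with $F \subsetneq M$ and $L \not\subseteq F$, simultaneously across all $d+1$ tensor factors. Kalai's single-colour relation does not transfer verbatim: the rewriting must be balanced against the multigraded structure of $T$, and the ``general position'' hypothesis must be strong enough for the balancing to go through but weak enough to be generically satisfied. Once the correct colorful lemma is isolated and proved, the rewriting step and the induction above follow Kalai's template closely; the real work lies in stating and verifying this key lemma.
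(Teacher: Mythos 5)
Your linear-algebra framework is the right one and, for $\kvec=\unitvec$, coincides with the paper's: your tensor product $T=V_1\otimes\cdots\otimes V_{d+1}$ is exactly the subspace $\bigwedge^{\unitvec}V$ of the exterior algebra used in Theorem~\ref{t:c-k-p}, your $S_{\KK}$ is the space $W_{\unitvec}$ there, your $U=U_1\otimes\cdots\otimes U_{d+1}$ is the span of the vectors $g_S$ with $S\subseteq\bar R$ (which the paper shows lies inside its space $A_{\unitvec}$), and the dimension count $\dim U+\dim S_{\KK}\le\dim T$ is the same codimension argument. However, you explicitly stop short of proving the one statement that carries all the content, namely $U\cap S_{\KK}=\{0\}$, and you say yourself that ``the real work lies in stating and verifying this key lemma.'' That is a genuine gap, not a routine verification: without it the proposal proves nothing beyond a reduction.

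For comparison, here is how the paper closes exactly that gap. The subspaces $U_i$ are not taken abstractly ``in general position'' but are spanned by vectors $g_j$, $j\in N_i\setminus R_i$, coming from a block-diagonal orthogonal matrix $A$ each of whose diagonal blocks $A_{N_i|N_i}$ has \emph{every} square submatrix nonsingular; that is the precise genericity condition you were looking for. The disjointness is then proved not by modifying a hypothetical $x\in U\cap S_{\KK}$ into an element of $U\cap S_{\KK_1}$ (it is unclear how to do this while staying inside the fixed subspace $U$), but by taking the \emph{first} elementary collapse that removes a colorful face carrying a nonzero coefficient, arranging (via Kalai's Lemma~3.2) that the collapse is special so that $|L|=d$ exactly, and pairing the relation $g_T\lip m=0$ (for $T\in\binom{R}{1}$ in the $\unitvec$ case) against $e_L$. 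This turns the hypothesis into a linear dependence among the columns of the compound matrix $C_{k-d}(A_{R|M\setminus L})$, which is impossible because $|M\cap N_i|\le r_i$ forces each block $A_{R_i|(M\setminus L)\cap N_i}$ to have independent columns, and Lemma~\ref{l:columns} (a Cauchy--Binet consequence) lifts this to the compound matrix. You would need to supply all of these ingredients --- the special-collapse reduction, the choice of test vectors, and the compound-matrix independence --- before your argument constitutes a proof.
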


\iffull \paragraph{Our main technical result.} 
\else \subparagraph{Our main technical result.} 
\fi
Now, let us present our main technical
tool for a proof of Proposition~\ref{p:kim_collapsible} and consequently for a proof
of Theorem~\ref{t:cfh_optimal_collapsible} as well. 

We denote by $\N$ the set of positive integers whereas $\N_0$ is
the set of non-negative integers. Let us consider $c \in \N$ and
vectors $\kvec = (k_1,
\dots, k_c), \rvec = (r_1,
\dots, r_c) \in \N_0^c$ and $\nvec = (n_1, \dots, n_c) \in \N^c$ such that $\kvec, \rvec \leq \nvec$. (Here the
notation $\avec \leq \bvec$ means that $\avec$ is less or equal to $\bvec$ in
every coordinate.) We will also use the
notation $k := k_1 + \cdots + k_c$,
$n := n_1 + \cdots + n_c$, and $r := r_1 + \cdots + r_c$.
Let $N$ be a set with $n$ elements partitioned as $N = N_1 \sqcup \cdots
\sqcup N_c$ where $|N_i| = n_i$ for $i \in [c]$. By $\binom{N}{\kvec}$ we
denote the set of all subsets $A$ of $N$ such that $|A \cap N_i| = k_i$ for
every $i \in [c]$. Note that $\binom{N}{\kvec} \subseteq \binom{N}{k}$ where
$\binom{N}{k}$ denotes the set of all subsets of $N$ of size $k$.

Let $\KK$ be a simplicial complex with the vertex set $N$ as above. We say
that a face $A$ of $\KK$ is $\kvec$-colorful if $A \in \binom{N}{\kvec}$, that
is, $|A \cap N_i| = k_i$ for every $i \in [c]$. The earlier notion of colorful face
corresponds to setting $c= d+1$ and $\kvec = \unitvec := (1,\dots, 1)\in \N^c$. By $f_{\kvec} = f_{\kvec}(\KK)$ we denote the
\emph{$\kvec$-colorful $f$-vector} of $\KK$, that is, the number of $\kvec$-colorful
faces in $\KK$.

Let us further assume that we are given sets $R_i \subseteq  N_i$ with $|R_i| =
r_i$ for every $i \in [c]$. Let $R = R_1 \sqcup \cdots \sqcup R_{c}$ and
$\bar R := N \setminus R$. Then, we define the set system

\begin{equation*}
P_{\kvec}(\nvec,d,\rvec) = \left \{S \in \binom{N}{\kvec}: |S\cap \bar{R}| \leq d \right \}.
\end{equation*}

We remark that $P_{\kvec}(\nvec,d,\rvec)$ is not a simplicial complex, as it
contains only sets in $\binom{N}{\kvec}$. However, this set system is useful
for estimating the number of $\kvec$-colorful faces in a $d$-collapsible
complex. 
%
%In the following we will use the notation $p_{\kvec}(\nvec,d,\rvec)$ for the
By $p_{\kvec}(\nvec,d,\rvec)$ we denote the size of $P_{\kvec}(\nvec,d,\rvec)$,
that is, $p_{\kvec}(\nvec,d,\rvec) :=
|P_{\kvec}(\nvec,d,\rvec)|$.

%\martin{Earlier we have condition $\rvec + \unitvec \leq \nvec$. But now I am not
%sure if we really want to impose that condition. Perhaps, we want to allow to
%state the theorem below without that condition. But I would like to recall
%first where did $\rvec + \unitvec \leq \nvec$ come from so that we do not
%overlook anything.
%}

\begin{theorem}
  \label{t:c-k-p}
  For integers $c,d\geq 1$, let $\KK$ be a $d$-collapsible simplicial complex
  with vertex partition $N = N_1\sqcup \dots \sqcup N_{c}$ and let $\nvec =
  (n_1, \dots, n_c)
  \in \N^{c}$ be the vector with $n_i=|N_i|$. For $\rvec=(r_1, \dots,
  r_c) \in \N^c$ such
  that $\dim \KK[N_i] \leq r_i - 1$ for $i\in[c]$ and $\kvec \in \N_0^{c}$
  such that $\kvec \leq \nvec$ it follows that 
  \begin{equation*}
    f_{\kvec}(\KK) \leq p_{\kvec}(\nvec,d,\rvec).
  \end{equation*}
\end{theorem}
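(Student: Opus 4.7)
I plan to prove Theorem~\ref{t:c-k-p} by adapting Kalai's exterior-algebra argument for the (non-colorful) fractional Helly theorem to the multigraded, colorful setting.

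\textbf{Generic wedge set-up.} For each color $i\in[c]$, let $V_i$ be an $n_i$-dimensional vector space with basis $\{e_v:v\in N_i\}$. Assign vectors $x_v\in V_i$ by setting $x_v:=e_v$ for $v\in R_i$ and $x_v:=e_v+\sum_{u\in R_i}\lambda_{v,u}e_u$ for $v\in N_i\setminus R_i$, with generic scalars $\lambda_{v,u}$. For each $\kvec$-colorful subset $A\subseteq N$, form the generic wedge
\[
  x_A:=\bigotimes_{i=1}^c\bigwedge_{v\in A\cap N_i}x_v\in\bigotimes_{i=1}^c\Lambda^{k_i}V_i.
\]
In the standard basis $\{e_T:=\bigotimes_i\bigwedge_{v\in T\cap N_i}e_v:T\in\binom{N}{\kvec}\}$ of the target tensor product, the expansion of $x_A$ is supported on those $e_T$ with $T\cap\bar R\subseteq A\cap\bar R$, because each $x_v$ for $v\in\bar R$ is a linear combination of $e_v$ itself and basis vectors indexed by $R$.

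\textbf{Reduction to a linear-algebraic bound.} Let $W:=\operatorname{span}\{e_T: T\in P_{\kvec}(\nvec,d,\rvec)\}$, a subspace of dimension $p_{\kvec}(\nvec,d,\rvec)$, and let $\pi$ be the projection onto $W$ along $\operatorname{span}\{e_T: T\notin P_{\kvec}(\nvec,d,\rvec)\}$. Since the wedges $\{x_A\}_{A\in\binom{N}{\kvec}}$ form a basis of the ambient space, the subcollection $\{x_A: A\in\KK\text{ $\kvec$-colorful}\}$ is linearly independent, so the theorem reduces to showing that $\pi$ restricts to an injection on $\operatorname{span}\{x_A: A\in\KK\text{ $\kvec$-colorful}\}$, yielding $f_{\kvec}(\KK)=\dim\operatorname{span}\{x_A\}\leq\dim W=p_{\kvec}(\nvec,d,\rvec)$.

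\textbf{Using $d$-collapsibility and the dimension bounds.} The injectivity of $\pi$ is established by processing a $d$-collapse sequence $\KK=\KK_0\supset\cdots\supset\KK_\ell=\emptyset$. At each step removing the interval $[L_t,M_t]$ with $|L_t|\leq d$, the free-face property ($M_t$ is the unique maximal face of $\KK_t$ containing $L_t$) produces algebraic relations that let one trade each $x_A$ with $A\in[L_t,M_t]$ and $|A\cap\bar R|>d$ for wedges of faces collapsed earlier plus elements of $W$. The per-color dimension bounds $\dim\KK[N_i]\leq r_i-1$ ensure that certain higher-degree wedges within a color cannot contribute, closing the induction. Equivalently, the argument can be phrased as constructing a combinatorial injection $\sigma$ from the $\kvec$-colorful faces of $\KK$ into $P_{\kvec}(\nvec,d,\rvec)$ with $\sigma(A)\cap\bar R\subseteq A\cap\bar R$, which certifies the nonvanishing of a suitable minor of the coefficient matrix.

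\textbf{Main obstacle.} The crux is the precise algebraic identity encoding $d$-collapsibility: how to combine the small-dimensional pivot $L_t$ with the dimension bounds so as to write each $x_A$ for $A\in[L_t,M_t]$ $\kvec$-colorful as an element of $W$ plus wedges of previously collapsed faces. In the non-colorful setting this is the content of Kalai's exterior algebraic-shifting argument; the colorful adaptation requires keeping careful track of the multigrading and the per-color dimension bounds simultaneously. As the authors note in the introduction, guessing the correct form of this identity is the principal insight---once found, the verification proceeds by a straightforward (if technical) exterior-algebra computation.
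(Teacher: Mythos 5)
Your set-up is a legitimate ``transposed'' version of Kalai's argument (generic wedges $x_A$ indexed by faces, projected onto the coordinates indexed by $P_{\kvec}(\nvec,d,\rvec)$, rather than the paper's annihilator subspace $A_{\kvec}=\{m: g_T\lip m=0 \ \forall T\in\binom{R}{k-d}\}$ intersected with $W_{\kvec}=\SPAN\{e_S: S\in\KK\cap\binom{N}{\kvec}\}$), and the support claim for $x_A$ and the dimension-count reduction are fine. But the proof has a genuine gap exactly where you flag it: the entire content of the theorem is the step you describe only as ``the free-face property produces algebraic relations that let one trade each $x_A$\dots for wedges of faces collapsed earlier plus elements of $W$,'' and you supply neither the identity nor the mechanism. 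As written, nothing in your argument actually uses $d$-collapsibility or the hypotheses $\dim\KK[N_i]\leq r_i-1$; the phrase ``certain higher-degree wedges within a color cannot contribute'' is not a proof. The alternative you offer---a combinatorial injection $\sigma$ from the $\kvec$-colorful faces of $\KK$ into $P_{\kvec}(\nvec,d,\rvec)$ with $\sigma(A)\cap\bar R\subseteq A\cap\bar R$---is a strictly stronger statement than what the algebraic argument delivers and is not known to follow from it, so it cannot be invoked as an ``equivalent'' phrasing.

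For comparison, here is how the paper closes this gap. One first replaces the collapsing sequence by \emph{special} elementary $d$-collapses (Kalai's Lemma~3.2), so that whenever a face of size $k>d$ is removed the pivot $L$ has $|L|=d$ exactly. Given a nonzero $m=\sum\alpha_S e_S\in A_{\kvec}\cap W_{\kvec}$, one looks at the first collapse step $(L,M)$ killing some $U$ with $\alpha_U\neq 0$ and evaluates $\langle e_L, g_T\lip m\rangle=0$ for all $T\in\binom{R}{k-d}$; the free-face property collapses this to a relation supported on $\{S: L\subseteq S\subseteq M\}$, which is precisely a linear dependence among the columns of the compound matrix $C_{k-d}(A_{R|M\setminus L})$ with a nonzero coefficient at $U$. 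The contradiction then comes from Lemma~\ref{l:columns} together with the block structure of $A$: the columns of each block $A_{R_i|(M\setminus L)\cap N_i}$ are independent because $|(M\setminus L)\cap N_i|\leq|M\cap N_i|\leq r_i=|R_i|$, and this is the only place the per-color dimension bounds enter. Your proposal would be complete only after carrying out the analogous computation in your dual formulation (choosing the $\lambda_{v,u}$ so that the relevant minors, i.e.\ the compound matrices of the $R_i\times(M\setminus L)\cap N_i$ blocks, are nonsingular); without it the proposal is a plan rather than a proof.
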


%\martin{Stated in contrapositive: If we have more colorful $\kvec$-faces than
%$|P_{\kvec}(\nvec,d,\rvec)|$, then there is a simplex which meets some $V_i$ in
%at least $r_i + 1$ points, which resembles more fractional Helly.
%In this way, the statement can be strengthened a bit to get such simplex which
%also meets all other color classes at least in the case that $\kvec = (1,
%\dots, 1)$; however it is perhaps better to
%start with checking this simpler version which already implies the conjecture
%of Kim.}

Theorem~\ref{t:c-k-p} is proved in Section~\ref{s:exterior}. 
%We conclude the
%introduction by
Here we show the implications Theorem~\ref{t:c-k-p} $\Rightarrow$
Proposition~\ref{p:kim_collapsible} and Proposition~\ref{p:kim_collapsible}
$\Rightarrow$ Theorem~\ref{t:cfh_optimal_collapsible}. In addition, we
advertise that Theorem~\ref{t:c-k-p} yields further generalizations of
Theorem~\ref{t:cfh_optimal_collapsible}. We explain this last part in
Section~\ref{s:mixed_colorful}.

\begin{proof}[Proof of Proposition~\ref{p:kim_collapsible} modulo
  Theorem~\ref{t:c-k-p}] 
We use Theorem~\ref{t:c-k-p} with $c = d+1$ and $\kvec = \unitvec$. Then it is
  sufficient to compute $p_{\unitvec}(\nvec, d, \rvec)$. On the one hand, the size of
  $\binom{N}{\unitvec}$ is $n_1 \dots n_{d+1}$. On the other hand, $A$ belongs to
  $\binom{N}{\unitvec} \setminus P_{\unitvec}(\nvec, d, \rvec)$ if and only if
  $|A \cap (N_i \setminus R_i)| = 1$ for every $i \in [d+1]$. Then, the
  number of such $A$ is $(n_1 - r_1)\cdots(n_{d+1}-r_{d+1})$. Combining
  these observations we obtain the required formula
 \[
   p_{\unitvec}(\nvec, d, \rvec) = n_1 \dots n_{d+1} - (n_1 -
   r_1)\cdots(n_{d+1}-r_{d+1}).
 \]
\end{proof}

\begin{proof}[Proof of Theorem~\ref{t:cfh_optimal_collapsible} modulo
  Proposition~\ref{p:kim_collapsible}]
  By contradiction, let us assume that for every $i \in [d+1]$ we get $\dim
  \KK[N_i] < (1-(1-\alpha)^{1/(d+1)})n_i - 1$. Let us set $r_i := \dim \KK[N_i] +
  1 < (1-(1-\alpha)^{1/(d+1)})n_i$. Then Proposition~\ref{p:kim_collapsible}
  gives that the number of colorful $d$-faces is at most
\begin{equation*}
\prod_{i=1}^{d+1} n_i - \prod_{i=1}^{d+1}(n_i-r_i) < \prod_{i=1}^{d+1}n_i - (1-(1-(1-\alpha)^{1/(d+1)}))^{d+1}\prod_{i=1}^{d+1}n_i = \alpha \prod_{i=1}^{d+1}n_i
\end{equation*}
%\begin{align*}
  %n_1 \cdots n_{d+1} - (n_1 - r_1)\cdots(n_{d+1} - r_{d+1}) &< n_1 \cdots
  %n_{d+1} - n_1 \cdots n_{d+1}(1-(1-(1-\alpha)^{1/(d+1)}))^{d+1} \\
  %&= \alpha n_1 \cdots n_{d+1}
%\end{align*}
which is a contradiction due to the strict inequality on the first line.
\end{proof}

\iffull\else
\subparagraph{A topological version?}
A simplicial complex $\KK$ is \emph{$d$-Leray} if the $i$th reduced
homology group $\tilde H_i(\LL)$ (over $\Q$) vanishes for every induced subcomplex
$\LL \leq \KK$ and every $i \geq d$. As we already know, every
$d$-representable complex is $d$-collapsible, and in addition every
$d$-collapsible complex is $d$-Leray~\cite{wegner75}. Helly-type theorems
usually extend to $d$-Leray complexes and such extensions are interesting
because they allow topological versions of Helly-type when collections of
convex sets are replaced with good covers. We refer to several concrete
examples~\cite{helly30, kalai2005topological, alon-kalai-matousek-meshulam02}
or to the survey~\cite{tancer13survey}. 

We conjecture that it should be possible to extend
Theorem~\ref{t:cfh_optimal_collapsible} to $d$-Leray complexes and probably
Theorem~\ref{t:c-k-p} as well. In the full
version~\cite{bulavka-goodarzi-tancer20_arxiv}, we briefly discuss a possible
approach but also a difficulty in that approach. For completeness, this text is
also reproduced in Appendix~\ref{a:topological}.

\fi

\section{Exterior algebra}
%\martin{This section is not updated yet according to the introduction.}
In this section we prove Theorem~\ref{t:c-k-p}.
First we overview the required tools from exterior algebra---here we follow~\cite[Section 2]{kalai84} very closely.

\label{s:exterior}
Let $N$ be a finite set of $n$ elements (with a fixed total order $\leq$)
%, by $\binom{N}{k}$ we denote the set of all $k$-subsets of $N$. 
and let $V = \R^N$ be the
$n$-dimensional real vector space with standard basis vectors $e_i$ for $i \in N$. Let $\bigwedge V$ be the $2^n$ dimensional exterior algebra over $V$ with basis
vectors $e_S$ for $S \subseteq N$. The exterior product $\wedge$ on this
algebra is defined so that it satisfies (i) $e_\emptyset$ is a neutral element,
that is $e_\emptyset \wedge e_S = e_S = e_S \wedge e_\emptyset$; (ii) $e_S = e_{i_1} \wedge \cdots
\wedge e_{i_s}$ for $S = \{i_1, \dots, i_s\} \subseteq N$ where $i_1 < \cdots <
i_s$ and we identify $e_i$ with $e_{\{i\}}$ for $i \in N$ (iii) $e_i \wedge e_j =
-e_j \wedge e_i$ for $i,j \in N$. By $\bigwedge^k V$ we denote the subspace of
$\bigwedge V$ generated by $(e_S)_{S \in \binom Nk}$ where $0 \leq k \leq n$.
We consider the standard inner product on
both $V$ and $\bigwedge V$ so that $(e_i)_{i \in N}$ and $(e_S)_{S
\subseteq N}$ are their orthonormal bases respectively. Then $(e_S)_{S \in
\binom Nk}$ is also an orthonormal basis of $\bigwedge^k V$. 

Given another basis $(g_i)_{i \in N}$, let $A = (a_{ij})_{i,j \in N}$ be the $N
\times N$ transition matrix\footnote{Here we index rows and columns of a matrix
by elements from some set, not necessarily integers. That is by $N \times N$ matrix we mean the matrix where both rows and
columns are indexed by elements of $N$.} from $(e_i)_{i \in N}$ to $(g_i)_{i \in N}$, that
is, $g_i = \sum_{j\in N} a_{ij} e_j$ for any $i \in N$. The basis $(g_i)_{i \in
N}$ induces a basis of $\bigwedge V$ given by $g_S = g_{i_1} \wedge \cdots
\wedge g_{i_s}$ for $S = \{i_1, \dots, i_s\} \subseteq N$. Transition from the
standard basis $(e_S)_{S \in \binom{N}{k}}$ of $\bigwedge^k V$ to $(g_S)_{S \in
\binom{N}{k}}$ is given by
\begin{equation}
g_S = \sum_{T \in \binom{N}{k}} \det A_{S|T} e_T
  \label{e:gs_standard}
\end{equation}
where $A_{S|T} = (a_{ij})_{i \in S, j \in T}$ for $S, T \subseteq N$.

Given an $m$-element set $M$ and $M \times N$-matrix $A$ and $k \leq m,n$, let
$C_k(A)$ be the \emph{compound} matrix $(\det A_{S|T})_{S \in \binom{M}{k}, T \in \binom{N}{k}}$.

The following lemma is implicitly contained in~\cite{kalai84}.

\begin{lemma}
\label{l:columns}
If the columns of $A$ are linearly independent, then the columns of $C_k(A)$ are linearly independent as well.
\end{lemma}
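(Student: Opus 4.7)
The plan is to reinterpret $C_k(A)$ as the coordinate matrix, in the standard basis of $\bigwedge^k \R^M$, of the wedge products of the columns of $A$, and then invoke the classical fact that wedges of basis vectors form a basis of the exterior power.

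Concretely, write $g_j := \sum_{i \in M} a_{ij}\, e_i$ for the $j$-th column of $A$, viewed as a vector in $\R^M$, and for $T = \{j_1 < \cdots < j_k\} \subseteq N$ set $g_T := g_{j_1} \wedge \cdots \wedge g_{j_k}$. Expanding the wedge by multilinearity and using antisymmetry of $\wedge$, exactly as in the derivation of~\eqref{e:gs_standard}, yields
\[
  g_T = \sum_{S \in \binom{M}{k}} \det A_{S|T}\, e_S.
\]
Thus the column of $C_k(A)$ indexed by $T$ is precisely the coordinate vector of $g_T$ in the orthonormal basis $(e_S)_{S \in \binom{M}{k}}$ of $\bigwedge^k \R^M$, and linear independence of the columns of $C_k(A)$ becomes equivalent to linear independence of the family $\{g_T : T \in \binom{N}{k}\}$ inside $\bigwedge^k \R^M$.

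To finish, I would use the hypothesis that $g_1, \ldots, g_n$ are linearly independent in $\R^M$: in particular $n \leq m$, so this family can be extended to a basis $(g_i)_{i \in M'}$ of $\R^M$ for some index set $M'$ of size $m$ containing $N$. The standard fact from multilinear algebra that whenever $(g_i)_{i \in M'}$ is a basis of $\R^M$ the wedges $(g_S)_{S \in \binom{M'}{k}}$ form a basis of $\bigwedge^k \R^M$ then implies that its subfamily $(g_T)_{T \in \binom{N}{k}}$ is linearly independent, which finishes the proof. The only point requiring care is the column-expansion identity above, which is just the rectangular analogue of~\eqref{e:gs_standard}; no real obstacle is anticipated.
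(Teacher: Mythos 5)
Your proof is correct and rests on the same exterior-algebra fact as the paper's: that wedges of basis vectors form a basis of $\bigwedge^k$. The only (immaterial) difference is the direction of the reduction --- the paper restricts to an invertible $n\times n$ submatrix $B$ and observes that $C_k(B)$ is a transition matrix between bases sitting inside $C_k(A)$ with all columns present, whereas you extend the columns $g_1,\dots,g_n$ to a basis of $\R^M$ and take the subfamily of wedges.
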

\begin{proof}
If columns of $A$ are linearly independent, then $n \leq m$. Consider an
  arbitrary square submatrix $B$ of rank $n$. Considering $B$ as a transition
  matrix from $(e_i)_{i \in N}$ to $(g_i)_{i \in N}$, we get that $C_k(B)$ is a transition matrix from  $(e_S)_{S \in \binom{N}{k}}$ to $(g_S)_{S \in
\binom{N}{k}}$, thus $C_k(B)$ has full rank. However, $C_k(B)$ is also a submatrix of $C_k(A)$ with all $\binom{n}{k}$ columns.
\end{proof}

Now, let us in addition assume that $(g_i)_{i \in N}$ is an orthonormal basis of $V$. As
pointed out by Kalai, it follows from the Cauchy-Binet formula that $(g_S)_{S
\in \binom Nk}$ is also an orthonormal basis of $\bigwedge^k V$.

For $f,g\in \bigwedge V$ we define its \emph{left interior product}, denoted by
$g \lip f$, as the unique element in $\bigwedge V$
which satisfies that $\langle u, g \lip f \rangle = \langle u \wedge g, f
\rangle$ for all $u\in \bigwedge V$. It turns out that $g_T \lip g_S$
is non-zero only if $T \subseteq S$, in which case $g_T \lip g_S = \pm g_{S\setminus T}$.

\iffull \paragraph{Colored exterior algebra.}
\else \subparagraph{Colored exterior algebra.}
\fi
Now we extend the previous tools to the colored setting.
From now on, let us assume that $N$ is an $n$-element set decomposed into
$c$-color classes, $N = N_1 \sqcup \cdots \sqcup N_{c}$. (The total order on
$N$ in this case starts with elements of $N_1$, then continues with elements of
$N_2$, etc.)
We pick an $N
\times N$-matrix $A$ so that it is a block-diagonal matrix with blocks
corresponding to  individual $N_i$. That is, $A_{N_i|N_j}$ is a zero matrix
whenever $i \neq j$. On the other hand, as shown by Kalai~\cite[Section~2]{kalai84}, it is possible to
pick each $A_{N_i|N_i}$ so that $(g_j)_{j \in N_i}$ is an orthonormal basis of
the subspace of $V$ generated by $(e_j)_{j \in N_i}$ and each square
submatrix of $A_{N_i|N_i}$ has full rank. Therefore, from now on, we assume
that we picked $A$ and the vectors $g_j$ this way. (Such a block matrix, for
$c=2$, is previously mentioned in~\cite{nevo05}.)

Similarly as in the introduction, let us set $\nvec = (n_1, \dots, n_c)$ so that $n_i = |N_i|$ for $i
\in [c]$; for simplicity, let us assume that each $N_i$ is nonempty---that
is, $\nvec$ is a $c$-tuple of positive integers. Let us also
consider another $c$-tuple $\kvec = (k_1, \dots, k_c)$ of non-negative
integers such that $\kvec \leq \nvec$ and we set $k := k_1 + \cdots + k_c$.
%(Here by $\avec \leq \bvec$ we mean that
%this inequality is satisfied in every coordinate.)
%By $\binom N\kvec$ we mean the set of all subsets $S$ of $N$ such that $S \cap
%N_i = k_i$. 
Then by $\bigwedge^{\kvec} V$ we mean the subspace of $\bigwedge V$
generated by $(e_S)_{S \in \binom N\kvec}$; recall that $\binom N\kvec$ is the
set of all subsets $A$ of $N$ such that $|A \cap N_i| = k_i$ and that $\binom
  N\kvec \subseteq \binom Nk$. Thus we also get that
%Setting $k = k_1 + \cdots +
%k_{d+1}$ we get $\binom N\kvec \subseteq \binom Nk$ and that
$\bigwedge^{\kvec} V$ is a subspace of $\bigwedge^k V$. In addition, due to our
choice of $(g_j)_{j \in N}$ we get that $g_S \in \bigwedge^{\kvec} V$ if $S \in
\binom N\kvec$. In addition $\det A_{S|T} = 0$ if $T \in \binom Nk
\setminus \binom N\kvec$ because $A_{S|T}$ is in this case a block matrix such
that some of the blocks is not a square. Thus the formula~\eqref{e:gs_standard} simplifies to
\begin{equation}
g_S = \sum_{T \in \binom{N}{\kvec}} \det A_{S|T} e_T.
  \label{e:gs_colorful}
\end{equation}

\begin{proof}[Proof of Theorem~\ref{t:c-k-p}]

For $\kvec \in \N^{c}$ such that $k\leq d$ we have that
  $P_{\kvec}(\nvec, d,\rvec) = \binom{N}{\kvec}$, thus the theorem follows
trivially. On the other hand, if $k > r$, then $k_i > r_i$ for some $i$ and
consequently $f_{\kvec}(\KK) = 0$ due to our assumption $\dim \KK[N_i] \leq r_i - 1$;
  therefore the theorem again follows trivially. From now on we assume $d+1
  \leq k
  \leq r$. (We also use the notation for the sets $R$, $\bar R$ and $R_i$ with
  $|R_i| = r_i$ as in
  the definition of $P_{\kvec}(\nvec, d,\rvec)$.)
%  i.e. all posible $\kvec$-colorful faces, and the conclusion follows.}

Let us define the following subspaces of
  $\bigwedge^{\kvec} V$
\begin{equation*}
A_{\kvec} := \left \{m\in \bigwedge^{\kvec}V :\left (\forall T\in \binom{R}{k - d} \right ) \text{ }g_T \lip m = 0 \right \},
\end{equation*}
and
\begin{equation*}
W_{\kvec} := \SPAN \left \{e_S \in \bigwedge^{\kvec} V: S \in
  \binom{N}{\kvec}\text{ and } S \in \KK \right \},
\end{equation*}
from the definition it follows that the colorful $f$-vector and the dimension of $W_{\kvec}$ coincide, i.e.  $f_{\kvec} = \dim(W_{\kvec})$.

We claim that
\begin{equation*}
  \dim(A_{\kvec}) \geq \left |\binom{N}{\kvec}\right | - p_{\kvec}(\nvec,d,\rvec).
\end{equation*}
Indeed, if $S\in \binom{N}{\kvec}$ such that $S \notin P_{\kvec}(\nvec,d,\rvec)$, then
  $|S\cap \bar{R}| > d$. As $S \subseteq R\sqcup \bar{R} = N$ and $|S| = k$ we have
  that $|S \cap R| < k - d$. If $T\in \binom{R}{k-d}$ we have that
  $S\nsupseteq T$; therefore $g_T\lip g_S = 0$.
  From this it follows that $g_S \in A_{\kvec}$ and finally the claim because
  $g_S\in \bigwedge^{\kvec} V$.

  The core of the proof is to show $A_{\kvec} \cap W_{\kvec} = \{ 0 \}$. Once
  we have this, we get $f_k(\KK) = \dim(W_k) \leq \dim \bigwedge^{\kvec} V - \dim
  A_{\kvec} \leq p_{\kvec}(\nvec,d,\rvec)$ which proves the theorem.

  For contradiction, let $m\in A_k\cap
  W_k$ be a non-zero element. Because $m \in W_{\kvec}$, it can be
  written as $m = \sum \alpha_Se_S$ where the sum is over all $S \in \binom
  N\kvec$ such that $S \in \KK$. Let $\KK_0, \dots, \KK_{\ell}$ be a sequence of
  simplicial complexes showing $d$-collapsibility of $\KK$ as in the definition
  of $d$-collapsible complex. In addition, due to~\cite[Lemma~3.2]{kalai84}, it
  is possible to assume that $\KK_i$
  arises from $\KK_{i-1}$ by so called \emph{special} elementary $d$-collapse which
  is either a removal of a maximal face of dimension at most $d-1$ or the
  minimal face (the face $L$ in the definition) has dimension exactly $d-1$. 
%  
%  Choose a fixed sequence of special
%  elementary $d$-collapses that reduced $K$ to $\emptyset$ and 
%  
Now let us consider the
  first step from $\KK_{i-1}$ to $\KK_i$ such that a face $U \in
  \binom{N}{\kvec}$ with non-zero $\alpha_U$ is eliminated. Denote by $L$ and $M$
  the faces determining the collapse as in the definition. We have $L \subseteq
  U \subseteq M$,
  $|M|\geq |U|=k>d$ and therefore $|L| = d$ (equivalently, $\dim L = d-1$), because
  the collapse is special. For $T\in
  \binom{R}{k - d}$ let $\tvec =(t_1, \dots,t_{c})\in \N^{c}$
  be such that $t_i = |T\cap N_i|$. Then $g_T =
    \sum_{P\in \binom{N}{\tvec}} \det (A_{T|P})e_P$ via~\eqref{e:gs_colorful}.
    We also need to simplify the expression $\langle e_L, g_T \lip e_S \rangle$
    for $S \in \binom{N}{\kvec}$.
We obtain
\begin{equation}
  \langle e_L, g_T \lip e_S \rangle = \langle e_L\wedge g_T, e_S \rangle =
  \sum_{P\in \binom{N}{\tvec}} \det (A_{T|P}) \langle e_L\wedge e_P,e_S
  \rangle
\end{equation}

  If $S\nsupseteq L$  then $\langle e_L\wedge e_P,e_S
    \rangle = 0$ for all $P$, and therefore $\langle e_L, g_T \lip e_S \rangle = 0$.
If $S\supseteq L$ then $\langle e_L \wedge e_P, e_S\rangle = 0$ unless $P = S\setminus L$ and therefore
$\langle e_L, g_T \lip e_S\rangle = \langle e_L \wedge e_{S\setminus L}, e_S \rangle  \det(A_{T|S\setminus L})$.

Since $m \in A_k$, for arbitrary $T\in \binom{R}{k-d}$ we get
\begin{align*}
  0 &= \langle e_L, g_T \lip m \rangle%%\\
    = \sum_{\substack{S\in \binom{N}{\kvec}:  S\in \KK}} \alpha_S \langle
    e_L,g_T \lip e_S \rangle %%\\
    = \sum_{S\in \binom{N}{\kvec}: S\in \KK_{i-1}} \alpha_S\langle e_L,g_T \lip
    e_S\rangle\\
    &= \sum_{S\in \binom{N}{\kvec}:S \supseteq L} \alpha_S \langle
    e_L,g_T\lip e_S\rangle
  =\sum_{S\in \binom{N}{\kvec}:M \supseteq S \supseteq L} \alpha_S \langle e_L\wedge
  e_{S\setminus L}, e_S\rangle \det(A_{T|S\setminus L})
\end{align*}
where the third equality follows from the fact that $\alpha_S = 0$ for $S \in
\KK
\setminus \KK_{i-1}$ due to our choice of $\KK_{i-1}$ and the last two equalities
follow from our earlier simplification of $\langle e_L,g_T \lip  e_S\rangle$.
(We also use that the expressions $S \supseteq L$ and $M \supseteq S \supseteq
L$ are equivalent as $M$ is the unique maximal face containing $L$.)

We also have $U \in \binom{N}{\kvec}$ with $M \supseteq U \supseteq
L$ for which $\alpha_U \neq
0$ as well as $\langle e_L\wedge  e_{U\setminus L}, e_U\rangle$ is nonzero (the
latter one equals $\pm 1$). Therefore the expression above is a linear dependence of the columns of
$C_{k-d}(A_{R|M\setminus L})$. However, we will also show that the columns of
$C_{k-d}(A_{R|M\setminus L})$ are linearly independent, thereby getting a
contradiction. Via Lemma~\ref{l:columns}, it is sufficient to check that
the columns of $A_{R|M\setminus L}$ are linearly independent.  Because $A$ is a
block-matrix with blocks $A_{N_i|N_i}$, we get that $A_{R|M\setminus L}$ is a
block matrix with blocks $A_{R_i|(M\setminus L)\cap N_i}$. Thus it is
sufficient to check that the columns are independent in each block. But this
follows from our assumptions of how we picked $A$ in each block, using that $|R_i| =
r_i \geq |(M\setminus L)\cap N_i|$ as $|M \cap N_i| \leq r_i$ due to our
assumption $\dim \KK[V_i] \leq r_{i}-1$.
\end{proof}

\section{$\kvec$-colorful fractional Helly theorem}
\label{s:mixed_colorful}
Theorem~\ref{t:c-k-p} allows to generalize
Theorem~\ref{t:cfh_optimal_collapsible} in two more
directions.

The first generalization of
Theorem~\ref{t:cfh_optimal_collapsible} is already touched in the introduction.
We can deduce an analogy of Theorem~\ref{t:cfh_optimal_collapsible} for
$\kvec$-colorful faces (instead of just colorful $d$-faces) where $\kvec =
(k_1, \dots, k_c) \in \N_0^{c}$ is some vector with $c \geq 1$. For example, if $d = 2$,
$\kvec = (2, 1,1)$ and we understand the partition of $N = N_1 \sqcup N_2 \sqcup
N_3$ as coloring the vertices of $\KK$ red, green, or blue. Then we seek for
number of faces that contain two red vertices, one green vertex and one blue
vertex.

For the second generalization, let us first observe that in the conclusion of Theorem~\ref{t:cfh_optimal_collapsible} there is the
same coefficient $1 - (1- \alpha)^{1/(d+1)}$ independently of~$i$. However, in
the notation of Theorem~\ref{t:cfh_optimal_collapsible}, we 
may also seek for $i$ such that $\dim \KK[N_i] \geq \beta_i n_i + 1$ where
$\bbeta = (\beta_1, \dots, \beta_c) \in (0,1]^c$ is some fixed vector.
Then for given $\bbeta$, we want to find the lowest $\alpha \in (0,1]$ with
which we reach the conclusion analogous as in
Theorem~\ref{t:cfh_optimal_collapsible}.
This is a natural analogy of various Ramsey type statements: for example, if
the edges of a complete graph $G$ with at least $9$ vertices are colored blue or red,
then the graph contains either a blue copy of the complete graph on $3$ vertices
or a red copy of the complete graph on $4$ vertices.

For the purpose of stating the generalization, let us set 
\begin{equation}
\label{e:L_kd}
  L_\kvec(d) := \{\lvec = (\ell_1, \cdots \ell_c) \in \N_0^{c} \colon
  \ell_1 + \cdots + \ell_c \leq d \hbox{ and }\ell_i \leq
  k_i \hbox{ for }i \in[c]\}
\end{equation}
and
\begin{equation}
\label{e:alpha_kdbeta}
  \alpha_\kvec(d, \bbeta) :=  \sum_{\lvec = (\ell_1, \dots,
  \ell_{c})\in L_{\kvec}(d)}
  \prod_{i=1}^{c} \binom{k_i}{\ell_i}
  (1-\beta_i)^{\ell_i}(\beta_i)^{k_i-\ell_i}. 
\end{equation}

\begin{theorem}
\label{t:kcfh_optimal_collapsible}
Let $c, d \geq 1$ and $\kvec = (k_1, \dots, k_c) \in \N_0^{c}$ be such that
  $k := k_1 + \cdots + k_{c} \geq d+1$. Let $\KK$ be a $d$-collapsible simplicial complex with the set of vertices $N
  = N_1
  \sqcup \cdots \sqcup N_c$ divided into $c$ disjoint subsets. Let $n_i
  := |N_i|$ for $i \in [c]$ and assume that $\KK$ contains at least 
  $\alpha_{\kvec}(d, \bbeta) \big|\binom{N}{\kvec}\big|$ $\kvec$-colorful faces 
%  $$\alpha = \sum_{\substack{\lvec = (\ell_1, \dots, \ell_{d+1})\in \N^{d+1} \\ \ell_1 + \cdots + \ell_{d+1}  \leq d}} \prod_{i=1}^{d+1} \binom{k_i}{\ell_i} (1-\beta_i)^{\ell_i}(\beta_i)^{k_i-\ell_i}$$
  for some $\bbeta = (\beta_1, \dots, \beta_c) \in (0, 1]^{c}$. Then
  there is $i \in [c]$ such that $\dim \KK[N_i] \geq  \beta_i n_i - 1$.
\end{theorem}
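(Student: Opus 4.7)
The plan is to mirror the derivation of Theorem~\ref{t:cfh_optimal_collapsible} from Proposition~\ref{p:kim_collapsible}, now invoking Theorem~\ref{t:c-k-p} in its full $\kvec$-colorful generality. I would argue by contradiction: suppose $\dim \KK[N_i] < \beta_i n_i - 1$ for every $i \in [c]$, and set $r_i := \dim \KK[N_i] + 1$, so that $r_i < \beta_i n_i$ and $\dim \KK[N_i] \leq r_i - 1$. Theorem~\ref{t:c-k-p} then applies and yields
\[
f_{\kvec}(\KK) \leq p_{\kvec}(\nvec, d, \rvec) = \sum_{\lvec \in L_{\kvec}(d)} \prod_{i=1}^{c} \binom{n_i - r_i}{\ell_i}\binom{r_i}{k_i - \ell_i}.
\]
To contradict the hypothesis $f_{\kvec}(\KK) \geq \alpha_{\kvec}(d, \bbeta) \prod_i \binom{n_i}{k_i}$, it will suffice to prove the combinatorial inequality
\[
p_{\kvec}(\nvec, d, \rvec) < \alpha_{\kvec}(d, \bbeta) \prod_{i=1}^{c} \binom{n_i}{k_i},
\]
under the assumption $r_i < \beta_i n_i$ for every $i \in [c]$.

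Dividing the desired inequality by $\prod \binom{n_i}{k_i}$, the left-hand side becomes $P\bigl(\sum_{i=1}^c H_i \leq d\bigr)$, where the $H_i \sim \mathrm{Hypergeometric}(n_i, n_i - r_i, k_i)$ are independent (counting the elements of a uniformly random $\kvec$-colorful subset that fall in $N_i \setminus R_i$), while the right-hand side becomes $P\bigl(\sum_{i=1}^c Y_i \leq d\bigr)$ with $Y_i \sim \mathrm{Bin}(k_i, 1 - \beta_i)$ independent. Because $r_i < \beta_i n_i$ forces $(n_i - r_i)/n_i > 1 - \beta_i$, the hypergeometric $H_i$ has strictly larger mean than the binomial $Y_i$, which heuristically makes the event $\{\sum H_i \leq d\}$ less likely than $\{\sum Y_i \leq d\}$.

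The hard part will be turning this heuristic into a rigorous inequality. I would start from the identity
\[
\frac{\binom{n_i - r_i}{\ell_i}\binom{r_i}{k_i - \ell_i}}{\binom{n_i}{k_i}} = \binom{k_i}{\ell_i}\prod_{j=0}^{\ell_i-1}\frac{n_i - r_i - j}{n_i - j}\prod_{j=0}^{k_i - \ell_i - 1}\frac{r_i - j}{n_i - \ell_i - j}
\]
and bound each factor using $r_i < \beta_i n_i$. Since a naive term-by-term comparison over $\lvec \in L_{\kvec}(d)$ is subtle (the hypergeometric is more concentrated than the binomial of the same mean, so individual summands can go in the wrong direction), the plan is to aggregate the $\lvec$-contributions together, for instance via a stochastic coupling of $(H_i)_i$ and $(Y_i)_i$, or via a parameter interpolation that continuously moves $r_i/n_i$ up to $\beta_i$ while tracking monotonicity of the cumulative mass below level $d$. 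This last step is where I expect the principal technical difficulty to lie.
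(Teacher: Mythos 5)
Your reduction to a purely combinatorial inequality is where the argument breaks: the inequality
$p_{\kvec}(\nvec,d,\rvec) < \alpha_{\kvec}(d,\bbeta)\prod_{i}\binom{n_i}{k_i}$
is \emph{false} under the sole hypothesis $r_i<\beta_i n_i$, so no coupling or interpolation argument can establish it. A concrete counterexample (already with $c=1$, which the theorem permits): take $d=4$, $\kvec=(5)$, $\nvec=(10)$, $\rvec=(5)$, $\bbeta=(0.501)$, so that $r_1=5<5.01=\beta_1 n_1$ and $k=5=d+1$. Then $P_{\kvec}(\nvec,d,\rvec)$ consists of all $5$-subsets of $N$ except $\bar R$ itself, so $p_{\kvec}(\nvec,d,\rvec)=\binom{10}{5}-1=251$, whereas
$\alpha_{\kvec}(d,\bbeta)\binom{10}{5}=\bigl(1-(0.499)^5\bigr)\cdot 252\approx 244.2$.
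This is exactly the concentration effect you flag yourself: the hypergeometric variable has larger mean than the binomial but is much more concentrated, and when $d$ sits in the upper tail of both, its \emph{upper} tail is the smaller one, so the lower-tail comparison goes the wrong way. The theorem survives in such cases only because the bound $f_{\kvec}(\KK)\leq p_{\kvec}(\nvec,d,\rvec)$ of Theorem~\ref{t:c-k-p} is not attained by any $d$-collapsible complex with these finite parameters; but your proof never uses more than that bound, so it cannot close.

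The paper resolves precisely this difficulty (it is the subject of the remark following Proposition~\ref{p:kcfh_optimal_collapsible}) by an extra idea you are missing: \emph{boosting} the complex. One replaces $\KK$ by $\KK_{\langle m\rangle}$, which by Lemma~\ref{l:boosting} is still $d$-collapsible and has at least the same density of $\kvec$-colorful faces, while $\nvec$ and $\rvec$ get multiplied by $m$. Applying Theorem~\ref{t:c-k-p} to $\KK_{\langle m\rangle}$ and letting $m\to\infty$ turns the hypergeometric densities into the binomial ones, giving $\delta_{\kvec}(\KK)\leq\alpha_{\kvec}(d,\bbeta)$ in the limit. Since the limit yields only a non-strict inequality, the paper first proves a strict variant (Proposition~\ref{p:kcfh_optimal_collapsible}, with ``more than'' in the hypothesis and ``$>$'' in the conclusion) and then recovers Theorem~\ref{t:kcfh_optimal_collapsible} by applying it to $\bbeta-\eepsilon$ and letting $\varepsilon\to 0$; your write-up would need both of these additional steps as well.
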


The formula~\eqref{e:alpha_kdbeta} for $\alpha_{\kvec}(d, \bbeta)$ in
Theorem~\ref{t:kcfh_optimal_collapsible} is, unfortunately, a bit complicated. However, this is
the optimal value for $\alpha$ in the theorem. We first prove
Theorem~\ref{t:kcfh_optimal_collapsible} and then we will provide an example
showing that for every $d$, $\kvec$ and $\bbeta$ as in the theorem, the value
for $\alpha$ cannot be improved. %We will not need that but the following remark
The remark below is a probabilistic interpretation of~\eqref{e:alpha_kdbeta}. (This, for
example, easily reveals that $\alpha_{\kvec}(d, \bbeta) \in (0, 1]$ for given
parameters and will help us with checking monotonicity in $\bbeta$.)

\begin{remark} 
\label{r:prob_alpha}
  Consider a random experiment where we gradually for each $i$ pick
  $k_i$ numbers $x^i_{1}, \dots, x^{i}_{k_i}$ in the interval $[0,1]$
  independently at random (with uniform distribution).
  Let $\ell_i$ be the number of $x^i_j$ which are greater than $\beta_i$ and let us
  consider the event $A_{\kvec}(d, \bbeta)$ expressing that $\ell_1 + \cdots
  + \ell_{c} \leq d$. Then $\alpha_{\kvec}(d, \bbeta)$ is the probability
  $\PP[A_{\kvec}(d, \bbeta)]$.
  
%  where a bowman tries to hit $d+1$
%  targets $T_1, \dots, T_{d+1}$. The bowman shoots $k_i$ times to the target
%  $T_i$ and the probability of hitting the target is $\beta_i$ (independently
%  of other attempts). Then $\alpha_{\kvec}(d, \bbeta)$ is the probability that
%  the bowman misses at most $d$ times (among his $k$ shots).

Indeed, the probability that the number of $x^i_j$ which are greater than
  $\beta_i$ is exactly $\ell_i$ is given by the expression beyond the sum
  in~\eqref{e:alpha_kdbeta}. 
%  
%  Indeed the expression beyond the sum in~\eqref{e:alpha_kdbeta} is the
%  probability that $\ell_i = s_i$.
  %misses on target $T_i$ (for all $i$).
  Therefore, we need to sum this over all options giving $\ell_1 + \cdots
    + \ell_{c} \leq d$ and $\ell_i \leq k_i$.
\end{remark}

In the proof of Theorem~\ref{t:kcfh_optimal_collapsible} we will need the
following slightly modified proposition. We relax `at least' to `more
than' while we aim at strict inequality in the conclusion---this innocent change will be a significant advantage in the proof. On
the other hand, after this change we can drop the assumption $k \geq d+1$. But
this is only a cosmetic change, because the proposition below is vacuous if
$\alpha_{\kvec}(d, \bbeta) = 1$ which in particular happens if $k < d+1$.

\begin{proposition}
\label{p:kcfh_optimal_collapsible}
  Let $c,d \geq 1$ and $\kvec = (k_1, \dots, k_{c}) \in \N_0^{c}$.
%  be such that $k := k_1 + \cdots + k_{d+1} \geq d+1$. 
  Let $\KK$ be a $d$-collapsible simplicial complex with the set of vertices $N
  = N_1
  \sqcup \cdots \sqcup N_{c}$ divided into $c$ disjoint subsets. Let $n_i
  := |N_i|$ for $i \in [c]$ and assume that $\KK$ contains more than 
  $\alpha_{\kvec}(d, \bbeta) \big|\binom{N}{\kvec}\big|$ $\kvec$-colorful faces 
%  $$\alpha = \sum_{\substack{\lvec = (\ell_1, \dots, \ell_{d+1})\in \N^{d+1} \\ \ell_1 + \cdots + \ell_{d+1}  \leq d}} \prod_{i=1}^{d+1} \binom{k_i}{\ell_i} (1-\beta_i)^{\ell_i}(\beta_i)^{k_i-\ell_i}$$
  for some $\bbeta = (\beta_1, \dots, \beta_{c}) \in (0, 1]^{c}$. Then there
  is $i \in [c]$ such that $\dim \KK[N_i] >  \beta_i n_i - 1$.
\end{proposition}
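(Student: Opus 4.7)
The plan is to prove the contrapositive: assume $\dim \KK[N_i] \leq \beta_i n_i - 1$ for every $i \in [c]$, and derive $f_{\kvec}(\KK) \leq \alpha_{\kvec}(d, \bbeta)\,\big|\binom{N}{\kvec}\big|$. Setting $r_i := \dim \KK[N_i] + 1$, we then have $r_i \leq \beta_i n_i$. Monotonicity of $\alpha_{\kvec}(d,\cdot)$ in each coordinate of $\bbeta$ (which follows from the probabilistic interpretation in Remark~\ref{r:prob_alpha}, since a stochastically smaller $\mathrm{Bin}(k_i, 1-\beta_i)$ makes the event ``sum $\leq d$'' more likely) reduces the problem to proving the bound with $\bbeta$ replaced by $\bbeta' := \rvec/\nvec$; write $\beta_i' := r_i/n_i$.

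The core idea is to bridge Theorem~\ref{t:c-k-p}'s ``hypergeometric'' bound $p_{\kvec}(\nvec, d, \rvec)$ and the target ``binomial'' expression $\alpha_{\kvec}(d, \bbeta')\,\big|\binom{N}{\kvec}\big|$ via an averaging argument. Sample a random $\mathbf{R} = (R_1, \ldots, R_c)$ by including each $v \in N_i$ in $R_i$ independently with probability $\beta_i'$. A direct computation, using the identity $\binom{n_i}{\ell_i}\binom{n_i - \ell_i}{k_i - \ell_i} = \binom{n_i}{k_i}\binom{k_i}{\ell_i}$ and independence across colors, yields
\[
\mathbb{E}\bigl[\,p_{\kvec}(\nvec, d, |\mathbf{R}|)\,\bigr] \;=\; \alpha_{\kvec}(d, \bbeta') \cdot \big|\binom{N}{\kvec}\big|,
\]
where $|\mathbf{R}| := (|R_1|, \ldots, |R_c|)$. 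On any realization satisfying $|R_i| \geq r_i$ for every $i$, Theorem~\ref{t:c-k-p} applies with this specific $R$ and gives $f_{\kvec}(\KK) \leq p_{\kvec}(\nvec, d, |\mathbf{R}|)$; since $f_{\kvec}(\KK)$ is deterministic, if that inequality held on every realization, taking expectations would close the argument immediately.

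The main obstacle is precisely the ``bad'' event $\{|R_i| < r_i \text{ for some } i\}$, on which Theorem~\ref{t:c-k-p} does not apply to $\KK$ with $R$ as the chosen subset; its probability is nontrivial since $\mathbb{E}[|R_i|]=r_i$. I plan to resolve this by applying Theorem~\ref{t:c-k-p} instead to the restricted subcomplex $\KK[\mathbf{R}]$ (which remains $d$-collapsible with colour-wise dimensions at most $\min(r_i, |R_i|) - 1$), and by accounting for the bad-event contribution through a careful case split that exploits monotonicity of $p_{\kvec}(\nvec, d, \cdot)$ in $\rvec$. The ``strict inequality'' formulation of the proposition (``more than'' in the hypothesis, ``strictly greater than'' in the conclusion) is exactly the slack needed to absorb the small losses at the boundary of the bad event, which is precisely the advantage flagged by the authors before the statement.
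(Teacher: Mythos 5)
Your setup is sound up to the averaging step: the reduction to $\bbeta' := \rvec/\nvec$ via monotonicity of $\alpha_{\kvec}(d,\cdot)$ is fine, and the identity $\mathbb{E}\bigl[p_{\kvec}(\nvec,d,|\mathbf{R}|)\bigr] = \alpha_{\kvec}(d,\bbeta')\bigl|\binom{N}{\kvec}\bigr|$ is correct (for fixed $S\in\binom{N}{\kvec}$ the quantities $|S\cap\bar R_i|$ are independent $\mathrm{Bin}(k_i,1-\beta_i')$ variables, and one sums over $S$). The gap is in the treatment of the bad event, and it is not a boundary technicality. Since $\mathbb{E}[|R_i|]=r_i$, each event $\{|R_i|\ge r_i\}$ has probability only about $1/2$, so the good event has probability roughly $2^{-c}$; on the bad event you have no lower bound on $p_{\kvec}(\nvec,d,|\mathbf{R}|)$ in terms of $f_{\kvec}(\KK)$, and the deficit there is of the same order as the main term. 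Neither of your proposed remedies closes this. Applying Theorem~\ref{t:c-k-p} to $\KK[\mathbf{R}]$ (which is indeed still $d$-collapsible, by restricting the collapsing sequence) bounds $f_{\kvec}(\KK[\mathbf{R}])$, whose expectation is $f_{\kvec}(\KK)\prod_i(\beta_i')^{k_i}$, not $f_{\kvec}(\KK)$; and the strict-inequality slack in the proposition is an arbitrarily small $\varepsilon$-perturbation of $\bbeta$ (the authors use it only for the limit transition when deducing Theorem~\ref{t:kcfh_optimal_collapsible}), so it cannot absorb a constant-probability, constant-magnitude loss.

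There is also concrete evidence that no argument applying Theorem~\ref{t:c-k-p} only to $\KK$ itself can succeed: the pointwise inequality $p_{\kvec}(\nvec,d,\rvec)\le\alpha_{\kvec}(d,\rvec/\nvec)\bigl|\binom{N}{\kvec}\bigr|$ is false in general. For $c=1$, $d=1$, $\kvec=(2)$, $\nvec=(3)$, $\rvec=(2)$ one gets $p_{\kvec}(\nvec,d,\rvec)=3$, while $\alpha_{(2)}(1,(2/3))\cdot\binom{3}{2}=\frac89\cdot 3=\frac83$. So the deterministic bound that Theorem~\ref{t:c-k-p} yields for $f_{\kvec}(\KK)$ can already exceed your target expectation, and the surplus on the good event cannot in general compensate the deficit on the bad one. (This does not contradict the proposition, because no $1$-collapsible graph on three vertices attains $f_{(2)}=3$; it only shows your chain of inequalities breaks.) The ingredient the paper uses to bridge the hypergeometric bound and the binomial target is the boosting Lemma~\ref{l:boosting}: replace $\KK$ by $\KK_{\langle m\rangle}$, which remains $d$-collapsible and does not decrease the colorful density, apply Theorem~\ref{t:c-k-p} with parameters $(m\nvec,m\rvec)$, and let $m\to\infty$, where $\delta_{\kvec}(m\nvec,d,m\rvec)$ converges to $\alpha_{\kvec}(d,\rvec/\nvec)$. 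Your averaging identity would also close after boosting, since the relative fluctuation of $|R_i|$ then tends to $0$, but without some such renormalization the argument does not go through.
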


First we show how Theorem~\ref{t:kcfh_optimal_collapsible} follows from
Proposition~\ref{p:kcfh_optimal_collapsible} by a limit transition. Then we
prove Proposition~\ref{p:kcfh_optimal_collapsible}.

\begin{proof}[Proof of Theorem~\ref{t:kcfh_optimal_collapsible} modulo
  Proposition~\ref{p:kcfh_optimal_collapsible}.]
Let us consider $\varepsilon > 0$ such that $\bbeta - \eepsilon \in (0,
  1]^{c}$ for $\eepsilon = (\varepsilon, \dots, \varepsilon) \in
  (0,1]^{c}$. 
  
  First, we need to check $\alpha_\kvec(d, \bbeta) > \alpha_\kvec(d,
  \bbeta - \eepsilon)$. For this we will use Remark~\ref{r:prob_alpha} and we
  also use $k \geq d+1$. It is easy to check $A_{\kvec}(d, \bbeta) \supseteq A_{\kvec}(d,
  \bbeta - \eepsilon)$ which gives $\alpha_\kvec(d, \bbeta) \geq \alpha_\kvec(d,
    \bbeta - \eepsilon)$. In order to show the strict inequality, it remains to
    show that $A_{\kvec}(d, \bbeta) \setminus A_{\kvec}(d, \bbeta - \eepsilon)$ has
    positive probability. Consider the output of the experiment when each
    $x_i^j \in (\beta_i - \varepsilon, \beta_i)$. This output has positive probability
    $\varepsilon^k$. In addition, this output belongs to $A_{\kvec}(d, \bbeta)$
    whereas it does not belong to $A_{\kvec}(d, \bbeta - \eepsilon)$ (becuase
    $k \geq d+1$) as required.

 This means, that we can apply Proposition~\ref{p:kcfh_optimal_collapsible}
  with $\alpha_\kvec(d, \bbeta - \eepsilon)$ as we know that $\KK$ has at least
  $\alpha_{\kvec}(d, \bbeta) \big|\binom{N}{\kvec}\big|$ $\kvec$-colorful faces
  by assumptions of Theorem~\ref{t:kcfh_optimal_collapsible} which is more than $\alpha_{\kvec}(d, \bbeta - \varepsilon)
  \big|\binom{N}{\kvec}\big|$. We obtain $\dim \KK[N_i] > (\beta_i -
  \varepsilon) n_i - 1$. By letting $\varepsilon$ to tend to $0$, we obtain the
  required $\dim \KK[N_i] \geq \beta_i n_i - 1$.
\end{proof}

\iffull \paragraph{Boosting the complex.}
\else \subparagraph{Boosting the complex.}
\fi
In the proof of Proposition~\ref{p:kcfh_optimal_collapsible}, we will need the
following procedure for boosting the complex. For a given complex $\KK$ with
vertex set $N = N_1 \sqcup \cdots \sqcup N_{c}$ partitioned as usual, and a
non-negative integer $m$ we define the complex 
$K_{\langle m \rangle}$ as a complex with the vertex set $N \times [m] = N_1 \times
[m] \sqcup \cdots \sqcup N_{c} \times [m]$ whose maximal faces are of the
form $S \times [m]$, where $S$ is a maximal face of $K$. We will also use
the notation $\delta_\kvec(\KK) := f_\kvec(K)/|\binom{N}{\kvec}|$ for the
density of $\kvec$-colorful faces of $\KK$. 

\begin{lemma}
\label{l:boosting}
Let $\KK$ be a simplicial complex with vertex partition $N=N_1\sqcup \dots
  \sqcup N_{c}$ and $\kvec = (k_1, \dots, k_{c}) \in \N_0^{c}$, then
  \begin{enumerate}[(i)]
\item
  $\delta_{\kvec}(\KK_{\langle m \rangle}) \geq \delta_{\kvec}(\KK)$; and 
      %the
      %inequality is strict if $m \geq 2$, $k_j \geq 2$ for some $j \in [d+1]$ and $\delta_{\kvec}(\KK) \neq 0, 1$.
    \item if $\KK$ is $d$-collapsible, then $\KK_{\langle m \rangle}$ is
      $d$-collapsible as well.
\end{enumerate}
\end{lemma}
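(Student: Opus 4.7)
The plan is to address (i) and (ii) separately, relying on the clean reformulation that a set $A \subseteq N \times [m]$ is a face of $\KK_{\langle m \rangle}$ if and only if its projection $\pi(A)$ to the first coordinate is a face of $\KK$.

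For (i), I would use a conditioning argument. Let $T$ be a uniformly random $\kvec$-colorful subset of $N \times [m]$, and let $\mathbf{h}(T) := (h_1, \dots, h_c)$ where $h_i := |\pi(T \cap (N_i \times [m]))|$. Because the distribution of $T$ is invariant under permutations of each $N_i$, conditioned on $\mathbf{h}(T) = \mathbf{h}$ the projection $\pi(T)$ is uniformly distributed among $\mathbf{h}$-colorful subsets of $N$; this yields
\begin{equation*}
\delta_{\kvec}(\KK_{\langle m \rangle}) = \sum_{\mathbf{h} \leq \kvec} \Pr[\mathbf{h}(T) = \mathbf{h}] \cdot \delta_{\mathbf{h}}(\KK).
\end{equation*}
The inequality then reduces to the monotonicity $\delta_{\mathbf{h}}(\KK) \geq \delta_{\kvec}(\KK)$ for $\mathbf{h} \leq \kvec$, which follows from double-counting pairs $G \subseteq F$ with $G$ an $\mathbf{h}$-face and $F$ a $\kvec$-face, giving $\prod_i \binom{k_i}{h_i}\, f_{\kvec}(\KK) \leq \prod_i \binom{n_i - h_i}{k_i - h_i}\, f_{\mathbf{h}}(\KK)$ and rearranging.

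For (ii), I would induct on the length of a $d$-collapsing sequence for $\KK$. It suffices to show that a single elementary $d$-collapse of $\KK$ via a pair $(L,M)$, with $L = \{v_1, \dots, v_\ell\}$ and $\ell \leq d$, lifts to a sequence of elementary $d$-collapses that transforms $\KK_{\langle m \rangle}$ into $\KK'_{\langle m \rangle}$, where $\KK' := \KK \setminus \{A : L \subseteq A\}$. The faces that must be removed are precisely those $A \subseteq M \times [m]$ with $L \subseteq \pi(A)$. For each $\phi : L \to [m]$, set $L'_\phi := \{(v, \phi(v)) : v \in L\}$ (of dimension $\ell - 1 \leq d-1$) and
\begin{equation*}
A_\phi := (M \times [m]) \setminus \bigcup_{i \in [\ell]} \{(v_i, j) : j < \phi(v_i)\}.
\end{equation*}
I would order the functions $\phi$ lexicographically by $(\phi(v_1), \dots, \phi(v_\ell))$ and perform the elementary collapses via the pairs $(L'_\phi, A_\phi)$ in this order.

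The main obstacle is verifying that at each step $A_\phi$ is the unique maximal face of the current complex containing $L'_\phi$. On one hand, $L'_{\phi'} \subseteq A_\phi$ forces $\phi'(v_i) \geq \phi(v_i)$ for every $i$, hence $\phi' \geq \phi$ lexicographically; so $A_\phi$ still belongs to the current complex. On the other hand, any face of the current complex containing $L'_\phi$ must be contained in $A_\phi$: if it included some $(v_i, j)$ with $j < \phi(v_i)$, then replacing $(v_i, \phi(v_i))$ by $(v_i, j)$ in $L'_\phi$ would produce $L'_{\phi'}$ with $\phi' < \phi$ contained in that face, contradicting the fact that such faces were removed at an earlier step. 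Once this uniqueness is established, iterating over all $\phi$ removes exactly $\{A \in \KK_{\langle m \rangle} : L \subseteq \pi(A)\}$, brings $\KK_{\langle m \rangle}$ to $\KK'_{\langle m \rangle}$, and the inductive hypothesis applied to $\KK'$ finishes the argument.
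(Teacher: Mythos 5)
Your proof is correct, but it takes a genuinely different route from the paper in both parts. For (i), the paper uses a short coupling: it takes a uniform $\kvec$-tuple $U$ in $N\times[m]$, extends $\pi(U)$ uniformly at random to a $\kvec$-tuple $W$ of $N$ (which is then itself uniform), and concludes $\PP[\pi(U)\in\KK]\geq \PP[W\in\KK]$ directly from downward closure, with no need for the shape decomposition or the monotonicity $\delta_{\mathbf{h}}(\KK)\geq\delta_{\kvec}(\KK)$. Your conditioning-plus-double-counting argument proves the same inequality in unrolled form; it is more computational, but it isolates a reusable fact (the colorful density is monotone non-increasing in the shape $\mathbf{h}$), and your counting inequality together with $\binom{n_i}{k_i}\binom{k_i}{h_i}=\binom{n_i}{h_i}\binom{n_i-h_i}{k_i-h_i}$ does rearrange to what is needed. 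For (ii), the paper reduces to repeatedly splitting a single vertex $v$ into $v_1,v_2$ and checks three cases ($v\notin M$, $v\in M\setminus L$, $v\in L$), the last needing two elementary collapses; you instead collapse the full $m$-fold blowup in one pass by ordering the lifts $L'_\phi$ of $L$ lexicographically. Your uniqueness argument for $A_\phi$ is sound: any surviving face containing $L'_\phi$ projects onto a face containing $L$, hence lies in $M\times[m]$, and if it contained some $(v_i,j)$ with $j<\phi(v_i)$ it would contain $L'_{\phi'}$ for a lexicographically smaller $\phi'$ and would already have been removed. The only points worth adding in a final write-up are the degenerate cases in (i) (when $f_{\kvec}(\KK)=0$ or some $n_i<k_i$, which the paper dispatches explicitly) and, in (ii), the remark that $A_\phi$ is indeed a face of $\KK_{\langle m\rangle}$ because $M$ is a maximal face of $\KK$, so that each pair $(L'_\phi,A_\phi)$ is a legitimate elementary $d$-collapse.
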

\begin{proof}
  Let us start with the proof of (i).
  If $\delta_{\kvec}(\KK) = 0$ there is nothing to prove. Thus we may assume that
  $\delta_{\kvec}(\KK) > 0$ (equivalently $f_\kvec(\KK) > 0$) and consequently
  we have that $|N_i| \geq k_i$. Let us interpret $\delta_{\kvec}(\KK)$ as the
  probability that a random $\kvec$-tuple of vertices in $N$ is a simplex of
  $\KK$, and we interpret $\delta_{\kvec}(\KK_{\langle m \rangle})$
  analogously. Let $\pi\colon N \times [m] \to N$ be the projection to the first
  coordinate. 
  Now, let $U$ be a $\kvec$-tuple of vertices in $N\times [m]$ taken uniformly
  at random. Considering the set $\pi(U) \subseteq N$, it need not be a
  $\kvec$-tuple (this happens exactly when two points in $U$ have the same
  image under $\pi$) but it can be extended to a $\kvec$-tuple $W$ using that
  $|N_i| \geq k_i$ for every $i$. Let $W$ be an extension of $\pi(U)$
  to a $\kvec$-tuple, taken uniformly at random among all possible choices.
  Because of the choices we made, $W$ is in fact a $\kvec$-tuple of vertices in
  $N$ taken uniformly at random. (Note that the choices done in each $N_i$ or
  $N_i \times [m]$ are independent of each other.)
  Altogether, using $\PP$ for probability, we get
  \[ 
  \delta_{\kvec}(\KK_{\langle m \rangle}) = \PP[U \in \KK_{\langle m \rangle}] =
  \PP[\pi(U) \in \KK] \geq \PP[W \in \KK] = \delta_{\kvec}(\KK).
  \]
  This shows (i).

  For~(ii), we follow the idea of splitting a vertex
  from~\cite[Proposition~14(i)]{alon-kalai-matousek-meshulam02} which proves a
  similar statement for $d$-Leray complexes. For a complex $\KK$ and a vertex
  $v \in \KK$ let $\KK^{v \to v_1, v_2}$ be a complex obtained from $\KK$ by
  splitting the vertex $v$ into two newly introduced vertices $v_1$ and $v_2$.
  That is, if $V$ is the set of vertices of $\KK$, then the set of vertices of
  $\KK^{v \to v_1, v_2}$ is $(V\cup \{v_1,v_2\})\setminus\{v\}$ assuming $v_1,
  v_2 \not\in V$. The maximal simplices of $\KK^{v \to v_1, v_2}$ are obtained
  from maximal simplices $S$ of $\KK$ by replacing $v$ with $v_1$ and $v_2$, if
  $S$ contains $v$ (otherwise $S$ is kept as it is). Our aim is to show that if
  $\KK$ is $d$-collapsible, then $\KK^{v \to v_1, v_2}$ is $d$-collapsible as
  well. This will prove~(ii) because $\KK_{\langle m \rangle}$ can be obtained
  from $\KK$ by repeatedly splitting some vertex. For the proof, we extend the
  notation $\KK^{v \to v_1, v_2}$ by setting $\KK^{v \to v_1, v_2} = \KK$ if
  $v$ does not belong to $\KK$.

  Let $\KK_0 = \KK, \KK_1, \dots, \KK_\ell = \emptyset$ be a sequence such that
  $\KK_i$ arises from $\KK_{i-1}$ by an elementary $d$-collapse. Our task is to
  show that $\KK^{v \to v_1, v_2}_{i-1}$ $d$-collapses to $\KK_i^{v \to v_1, v_2}$
  for $i \in [\ell]$. This will show the claim as $\KK^{v \to v_1, v_2}_\ell =
  \emptyset$. For simplicity of the notation, we will treat only the elementary
  $d$-collapse from $\KK$ to $\KK_1$ as other steps are analogous. We will assume $v \in
  \KK$, as there is nothing to do if $v \not \in \KK$.

  Let $L$ and $M$ be the faces from the elementary
  $d$-collapse. That is, $\dim L \leq d-1$; $M$ is the unique maximal face in
  $\KK$ which contains $L$ and $\KK_1$ is obtained from $\KK$ by removing all
  faces that contain $L$, including $L$. We will distinguish three cases
  according to whether $v \in L$ or $v \in M$.

  If $v \not\in M$ (which implies $v \not\in L$), then $M$ is the unique
  maximal face containing $L$ in $\KK^{v \to v_1, v_2}$ and the elementary
  $d$-collapse removing $L$ and all its superfaces yields $\KK_1^{v \to v_1,
  v_2}$.

  If $v \in M$ while $v \not \in L$, then $(M \cup \{v_1, v_2\})\setminus
  \{v\}$ is the unique maximal face containing $L$ in $\KK^{v \to v_1, v_2}$
  and the elementary $d$-collapse removing $L$ and all its superfaces yields $\KK_1^{v \to v_1,
      v_2}$.

  Finally, if $v \in M$ and $v \in L$, then we need to perform the $d$-collapse
  from $\KK^{v \to v_1, v_2}$ to $\KK_1^{v \to v_1, v_2}$ by two elementary
  steps; see Figure~\ref{f:doubled_in_L}. First we realize that $(M \cup \{v_1, v_2\})\setminus
    \{v\}$ is the unique maximal face containing $(L \cup \{v_1\}) \setminus
    \{v\}$ in $\KK^{v \to v_1, v_2}$. Because $\dim (L \cup \{v_1\}) \setminus
    \{v\} = \dim L$, we can perform an elementary $d$-collapse removing $(L \cup
    \{v_1\}) \setminus \{v\}$ and all its superfaces obtaining a complex
    $\KK'$. In $\KK'$  we have that $(M \cup \{v_2\}) \setminus \{v\}$ is the
    unique maximal face containing $(L \cup \{v_2\}) \setminus \{v\}$. After
    removing $(L \cup \{v_2\}) \setminus \{v\}$ and all its superfaces, we get
    desired $\KK_1^{v \to v_1, v_2}$ (note that in this case $\KK_1^{v \to v_1,
    v_2}$ is indeed obtained from $\KK^{v \to v_1, v_2}$ by removing $(L \cup
        \{v_1\}) \setminus \{v\}$, $(L \cup \{v_2\}) \setminus \{v\}$ and all
	their superfaces).
\begin{figure}
  \begin{center}
    \includegraphics[width=\textwidth]{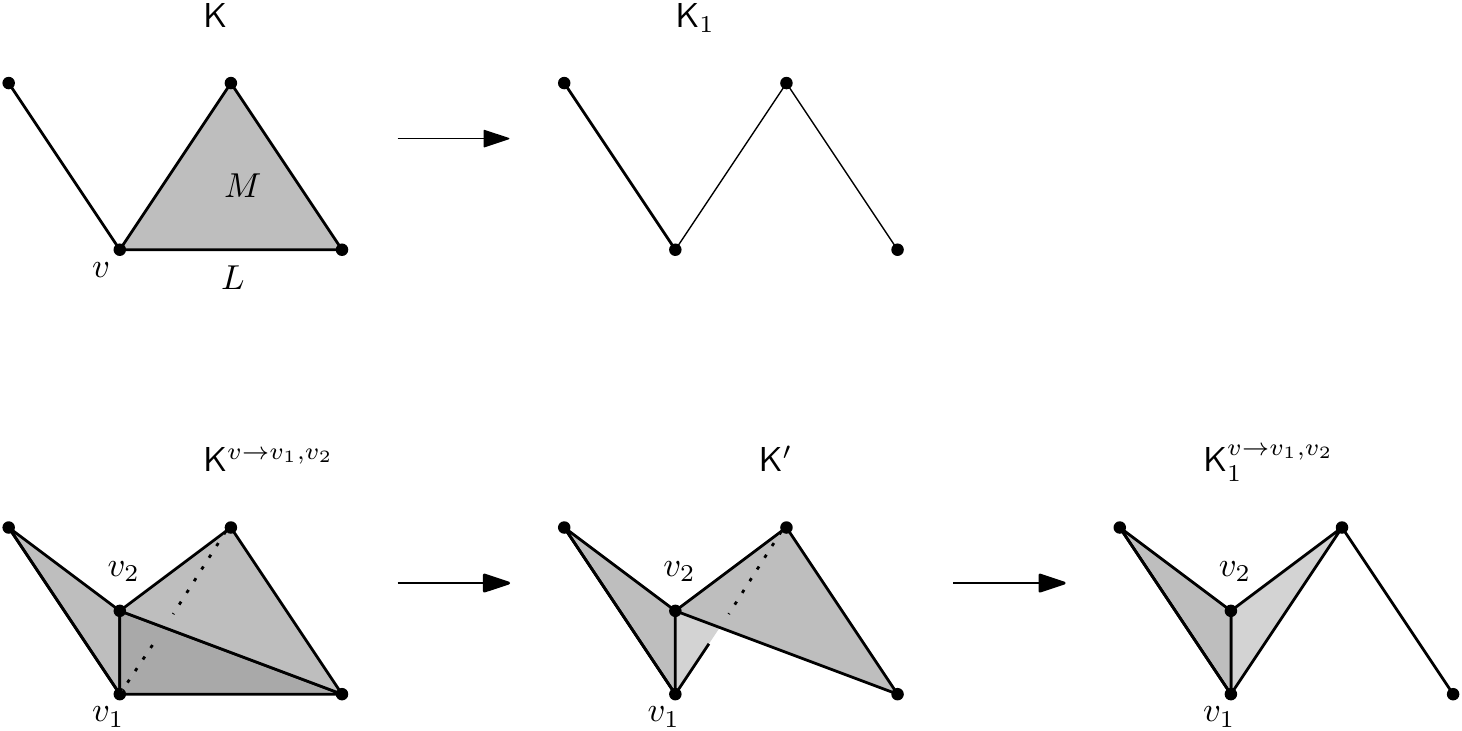}
    \caption{Collapses from $\KK^{v \to v_1, v_2}$ to $\KK_1^{v \to v_1, v_2}$
    if $v \in L$.}
    \label{f:doubled_in_L}
  \end{center}
\end{figure}
\end{proof}

\iffull \paragraph{Density of $P_{\kvec}(\nvec, d, \rvec)$.}
\else \subparagraph{Density of $P_{\kvec}(\nvec, d, \rvec)$.}
\fi
Now, we will provide a formula for the density of $P_{\kvec}(\nvec, d, \rvec)$.
In the following computations we also set $\delta_\kvec(\nvec, d,
  \rvec)= p_{\kvec}(\nvec,d,\rvec)/\big|\binom N \kvec \big|$ using the notation from
    the definition of $P_{\kvec}(\nvec, d, \rvec)$. We get

\begin{align*}
  p_{\kvec}(\nvec,d,\rvec) &= \left | \left \{S\in \binom{N}{\kvec} : |S\cap \bar{R}| \leq d \right \} \right | \\
%  &= \sum_{s=0}^{d} \left | \left \{S\in \binom{N}{\kvec} : |S\cap \bar{R}| = s \right \} \right | \\
  &=\sum_{\lvec=(\ell_1, \dots, \ell_{c}) \in L_{\kvec}(d)}\left | \left \{S\in \binom{N}{\kvec} : |S_i\cap \bar{R}_i| = l_i\right \} \right | \\
%  &=\sum_{s=0}^{d} \sum_{\substack{\lvec \in \N^{d+1} \\ l = s}} \prod_{i=1}^{d+1} \binom{n_i-r_i}{l_i}\binom{r_i}{k_i-l_i}\\
  &=\sum_{\lvec=(\ell_1, \dots, \ell_{c}) \in L_{\kvec}(d)} \prod_{i=1}^{c} \binom{n_i-r_i}{l_i}\binom{r_i}{k_i-l_i}.
\end{align*}

Then, using $(x)_m := x\cdot(x-1)\cdots(x-(m-1))$, the density is given by
\begin{equation}
  \label{e:pk-density}
  \delta_{\kvec}(\nvec,d,\rvec)= 
  \frac{p_{\kvec}(\nvec,d,\rvec)}{\prod_{i=1}^{c}\binom{n_i}{k_i}}
  =
  \frac{\sum\limits_{\lvec=(\ell_1, \dots, \ell_{c})
  \in L_{\kvec}(d)} \prod\limits_{i=1}^{c} \binom{k_i}{\ell_i} (n_i-
  r_i)_{\ell_i}(r_i)_{k_i-\ell_i} }{\prod_{i=1}^{c}(n_i)_{k_i}}. 
\end{equation}

\begin{proof}[Proof of Proposition~\ref{p:kcfh_optimal_collapsible}]
  For contradiction, let us assume that for every $i\in [c]$ we have that
  $\dim(\KK[V_i]) \leq \beta_in_i -1$. Let us set $r_i := \dim(\KK[V_i]) + 1 \leq
  \beta_in_i$. 
  Note that the conclusion of
  Theorem~\ref{t:c-k-p} can be restated as $\delta_\kvec(\KK) \leq
  \delta_\kvec(\nvec, d,
    \rvec)$.

  Now we get
  \begin{align*}
    \delta_\kvec(\KK) &\leq \liminf_{m\to \infty} \delta_\kvec(\KK_{\langle 
    m\rangle}) \hfill \text{ by Lemma~\ref{l:boosting}(i)}\\
    &\leq \liminf_{m\rightarrow \infty}\delta_\kvec{(m\nvec,d,m\rvec)} \text{ by Theorem~\ref{t:c-k-p} using
    Lemma~\ref{l:boosting}(ii)}\\
    &\leq \liminf_{m\rightarrow \infty}\delta_\kvec{(m\nvec,d,\lfloor m n_i
    \beta_i\rfloor)} \text{ using $r_i \leq \beta_i n_i$ and monotonicity of
    $p_\kvec(\nvec, d, \rvec)$ in $\rvec$}\\
    &= \liminf_{m\rightarrow \infty}\frac{\sum\limits_{\lvec=(\ell_1, \dots,
    \ell_{c})\in L_{\kvec}(d)} \prod\limits_{i=1}^{c} \binom{k_i}{\ell_i} (mn_i- \lfloor m n_i \beta_i
    \rfloor)_{\ell_i}(\lfloor m n_i \beta_i \rfloor)_{k_i-\ell_i}
    }{\prod_{i=1}^{c}(mn_i)_{k_i}} \text{ by~\eqref{e:pk-density}} \\
    &= \sum_{\lvec=(\ell_1, \dots, \ell_{c})\in L_{\kvec}(d)}
    \prod_{i=1}^{c} \binom{k_i}{\ell_i}
    (1-\beta_i)^{\ell_i}(\beta_i)^{k_i-\ell_i}\\
    &= \alpha_\kvec(d, \bbeta)
  \end{align*}
  which is a contradiction with the assumptions.
\end{proof}

\begin{remark}
It would be much more natural to try to avoid boosting the complex and show directly
  $\delta_k(\KK) \leq
  \delta_k(\nvec, d,
  \rvec) \leq \alpha_{\kvec}(d, \bbeta)$ in the proof of
  Proposition~\ref{p:kcfh_optimal_collapsible}. The former inequality follows
  from Theorem~\ref{t:c-k-p}. However, the latter inequality turned out to be
  somewhat problematic for us when we attempted to show it directly from the
  definition of $\alpha_\kvec(d, \bbeta)$ and from~\eqref{e:pk-density}. Thus,
  in our computations, we take an advantage of the fact that the computations
  in the limit are easier.
\end{remark}

%\begin{theorem}[The optimal $\kvec$-colorful fractional Helly theorem for
%  $d$-collapsible complexes]
%\red{
%\label{t:kcfh_optimal_collapsible}
 %Let $\KK$ be a $d$-collapsible simplicial complex with the set of vertices $N
%  = N_1
%  \sqcup \cdots \sqcup N_{d+1}$ divided into $d+1$ disjoint subsets. Let $n_i
%  := |N_i|$ for $i \in [d+1]$ and assume that $\KK$ contains more than
%  $\alpha |\binom{N}{\kvec}|$ $\kvec$-colorful faces with
%  $$\alpha = \sum_{\substack{\lvec \in \N^{d+1} \\ l \leq d}} \prod_{i=1}^{d+1} \binom{k_i}{l_i} (1-b_i)^{l_i}(b_i)^{k_i-l_i}$$
%  for some $\bvec \in (0, 1]^{d+1}$. Then there is $i \in [d+1]$ such that $\dim \KK[N_i] >  b_i n_i - 1$.}
%\end{theorem}

%We want to point that the dependency of $\alpha$ and $\bvec$ changed in the
%previous theorem compared with the expression in Theorem~\ref{t:cfh_optimal_collapsible}.

\iffull \paragraph{Tightness of Theorem~\ref{t:kcfh_optimal_collapsible}.} 
\else \subparagraph{Tightness of Theorem~\ref{t:kcfh_optimal_collapsible}.} 
\fi
We conclude
this section by showing that the bound given in
Theorem~\ref{t:kcfh_optimal_collapsible} is tight. 
%Our construction is a
%modification of the standard example showing tightness of Theorem~\ref{t:fh}
%for $\beta(\alpha, d) = 1 - (1-\alpha)^{1/(d+1)}$ (see, for example, 

Let us fix $c, d \in \N$, $\kvec = (k_1, \dots, k_{c}) \in \N_0^{c}$ with $k
:= k_1 + \cdots + k_{c} \geq d+1$ and $\bbeta = (\beta_1, \dots,
\beta_{c}) \in (0, 1]^{c}$ as in the statement of
Theorem~\ref{t:kcfh_optimal_collapsible}. Let $0 \leq \alpha' <
\alpha_{\kvec}(d, \bbeta)$. We will find a complex $\KK$ which contains at
least $\alpha'|\binom{N}{\kvec}|$ $\kvec$-colorful faces while $\dim \KK[N_i] <
\beta_i n_i - 1$ for every $i \in [c]$ (using the notation from the statement
of Theorem~\ref{t:kcfh_optimal_collapsible}). 

Similarly as in the proof of Theorem~\ref{t:kcfh_optimal_collapsible} let us consider $\varepsilon > 0$ such that $\bbeta - \eepsilon \in (0,
  1]^{c}$ for $\eepsilon = (\varepsilon, \dots, \varepsilon) \in
  (0,1]^{c}$. In addition, because $\alpha_{\kvec}(d, \bbeta)$ is continuous
  in $\bbeta$ due to its definition~\eqref{e:alpha_kdbeta}, we may pick
  $\varepsilon$ such that $\alpha' < \alpha_{\kvec}(d, \bbeta - \eepsilon)$.
  For simplicity of notation, let $\bbeta' = (\beta'_1, \dots, \beta'_{c}) :=
  \bbeta - \eepsilon$.

Now we pick a positive integer $m$ and set $\nvec = (m, \dots, m) \in
\N^{c}$, that is, $n_1 = \cdots = n_{c} = m$ and $n = cm$ in our standard
notation. We also set $\rvec = (r_1, \dots, r_{c})$ so that $r_i := \lfloor
\beta'_i m
\rfloor$.\footnote{This choice of $\nvec$ will yield a counterexample where each
color class has equal size. It would be also possible to vary the sizes.}
We assume that $m$ is large enough so that $r_i
\geq k_i$ for each $i \in [c]$. We 
define families $N_i$ of convex sets in $\R^d$ so that each $N_i$ contains
$r_i$ copies of $\R^d$ and $m - r_i$ hyperplanes in general position. We also
assume that the collection of all hyperplanes in $N_1, \dots, N_{c}$ is in
general position. We set $\KK$ to be the nerve of the family $N = N_1 \sqcup
\cdots \sqcup N_{c}$. In particular $\KK$ is $d$-representable (therefore
$d$-collapsible as well).

First, we check that $\dim \KK[N_i] <\beta_i m - 1$ provided that $m$ is large enough.
A subfamily of $N_i$ with nonempty intersection contains at most $d$
hyperplanes from $N_i$. Therefore $\dim \KK[N_i] < r_i + d = \lfloor \beta'_i m
\rfloor + d < \beta_i m -1$ for $m$ large enough.

Next we check that $\KK$ contains at
least $\alpha'|\binom{N}{\kvec}|$ $\kvec$-colorful faces provided that $m$ is
large enough. Partitioning $N_i$ so that $R_i$ is the subfamily of the copies
of $\R^d$ and $\bar R_i$ is the subfamily of hyperplanes, we get 
\[f_{\kvec}(K) = p_{\kvec}(\nvec, d, \rvec)
\]
from the definition of $p_{\kvec}(\nvec, d, \rvec)$.
Therefore~\eqref{e:pk-density} gives
\[
  \delta_{\kvec}(\KK)= 
%  \frac{p_{\kvec}(\nvec,d,\rvec)}{\prod_{i=1}^{d+1}\binom{n_i}{k_i}}
%  =
  \frac{\sum\limits_{\lvec=(\ell_1, \dots, \ell_{c})
  \in L_{\kvec}(d)} \prod\limits_{i=1}^{c} \binom{k_i}{\ell_i} (m-
  \lfloor \beta'_i m\rfloor)_{\ell_i}(\lfloor \beta'_i m \rfloor)_{k_i-\ell_i}
  }{\prod_{i=1}^{c}(m)_{k_i}}. 
\]
Passing to the limit (considering the dependency of $\KK$ on $m$), we get
\[
  \lim_{m \to \infty} \delta_{\kvec}(\KK) = \sum\limits_{\lvec=(\ell_1, \dots,
  \ell_{c})
    \in L_{\kvec}(d)} \prod\limits_{i=1}^{c} \binom{k_i}{\ell_i} (1 -
    \beta'_i)^{\ell_i}(\beta'_i)^{k_i - \ell_i} = \alpha_{\kvec}(d, \bbeta').
\]
Therefore, for $m$ large enough $\KK$ contains at
least $\alpha'|\binom{N}{\kvec}|$ $\kvec$-colorful as $\alpha' <
\alpha_{\kvec}(d, \bbeta')$.

\iffull\else
\bibliography{cfh}
\appendix
\fi

\section{A topological version?}
\iffull
A simplicial complex $\KK$ is \emph{$d$-Leray} if the $i$th reduced
homology group $\tilde H_i(\LL)$ (over $\Q$) vanishes for every induced subcomplex
$\LL \leq \KK$ and every $i \geq d$. As we already know, every
$d$-representable complex is $d$-collapsible, and in addition every
$d$-collapsible complex is $d$-Leray~\cite{wegner75}. Helly-type theorems
usually extend to $d$-Leray complexes and such extensions are interesting
because they allow topological versions of Helly-type when collections of
convex sets are replaced with good covers. We refer to several concrete
examples~\cite{helly30, kalai2005topological, alon-kalai-matousek-meshulam02}
or to the survey~\cite{tancer13survey}. 

We believe that it is possible to extend
Theorem~\ref{t:cfh_optimal_collapsible} to $d$-Leray complexes:
\else
\label{a:topological}
As we mentioned in the introduction we conjecture that it should be possible to extend
Theorem~\ref{t:cfh_optimal_collapsible} to $d$-Leray complexes and probably
Theorem~\ref{t:c-k-p} as well. Here we state the conjectured generalization of
Theorem~\ref{t:cfh_optimal_collapsible}.
\fi 

\begin{conjecture}[The optimal colorful fractional Helly theorem for
    $d$-Leray complexes]
 \label{c:frac-colorful-helly-leray}
 Let $\KK$ be a $d$-Leray simplicial complex with the set of vertices $N
  = N_1
  \sqcup \cdots \sqcup N_{d+1}$ divided into $d+1$ disjoint subsets. Let $n_i
  := |N_i|$ for $i \in [d+1]$ and assume that $\KK$ contains
  at least $\alpha n_1 \cdots n_{d+1}$ colorful $d$-faces for some $\alpha \in
  (0, 1]$. Then there is $i \in [d+1]$ such that $\dim \KK[N_i] \geq (1 -
  (1-\alpha)^{1/(d+1)}) n_i - 1$. 
\end{conjecture}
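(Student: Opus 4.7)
The plan is to prove a $d$-Leray analogue of Theorem~\ref{t:c-k-p}, after which Conjecture~\ref{c:frac-colorful-helly-leray} follows by exactly the same two-step reduction used in the $d$-collapsible case: first deduce the colorful Kim-type upper bound (Proposition~\ref{p:kim_collapsible} with ``$d$-collapsible'' replaced by ``$d$-Leray''), then deduce the fractional statement by the elementary inequality manipulation in the proof of Theorem~\ref{t:cfh_optimal_collapsible}. Both of those steps are purely combinatorial once the key inequality $f_{\kvec}(\KK) \leq p_{\kvec}(\nvec, d, \rvec)$ is in hand.

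For the key inequality, I would follow Kalai's framework for $d$-Leray complexes in~\cite{kalai84}, which replaces the stepwise collapse argument by an appeal to algebraic shifting. The aim is to shift $\KK$ to a combinatorially simpler complex $\KK^{\Delta}$ whose $\kvec$-colorful face numbers agree with those of $\KK$, whose induced subcomplex on each color class $N_i$ still has dimension at most $r_i-1$, and which is still $d$-Leray. One would then verify the inequality for shifted complexes directly, where the shifted $d$-Leray condition forces each face to use only ``small'' vertices and so restricts $\kvec$-colorful faces to lie in $P_{\kvec}(\nvec,d,\rvec)$. Concretely, I would reuse the block-diagonal generic matrix $A$ from Section~\ref{s:exterior} (orthonormal on each $N_i$, with all square subblocks of $A_{N_i|N_i}$ of full rank) so that the shifting operation associated with $(g_j)_{j\in N}$ respects the partition $N = N_1 \sqcup \cdots \sqcup N_c$, and study the subspace
\[
A_{\kvec} = \bigl\{ m \in \textstyle\bigwedge^{\kvec} V : g_T \lip m = 0 \text{ for all } T \in \binom{R}{k-d} \bigr\}
\]
intersected with the span of those $g_S$ with $S \in \KK$, mimicking the proof of Theorem~\ref{t:c-k-p}.

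The main obstacle---the one flagged by the authors---lies at precisely the point where the $d$-collapsible proof invoked the special elementary collapse to isolate all $\kvec$-colorful faces containing $L$ inside a single maximal face $M$. In the $d$-Leray setting, no such local removal is available: one must replace the collapse step by a purely homological input saying that a generic element of $A_{\kvec} \cap W_{\kvec}$ cannot exist. Equivalently, one needs a colorful algebraic shifting that (i) is generic enough for $g_T \lip$ to detect the $d$-Leray hypothesis, yet (ii) remains block-diagonal with respect to the partition. These two requirements pull in opposite directions: the restriction to the block-diagonal form $A_{N_i|N_i}$ weakens the genericity that standard exterior shifting relies on, so the shifted complex need not have the rigid ``compressed'' structure that makes the bound $f_{\kvec} \leq p_{\kvec}(\nvec, d, \rvec)$ transparent.

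I therefore expect the crux of the proof to be constructing a colored variant of algebraic shifting---or, alternatively, a replacement of Lemma~\ref{l:columns} by a homological non-vanishing statement valid for block-diagonal generic choices---that simultaneously sees the color partition and sees the $d$-Leray obstruction. A promising route would be to first settle the bipartite case $c=2$, where the block-diagonal $A$ has only two blocks and one can try to combine Kalai's shifting within each $N_i$ with an induction on $d$ using links, and then iterate the construction to arbitrary $c$. Until that colored shifting machinery is set up, the rest of the reduction is routine, which is why I would concentrate essentially all effort on this one technical step.
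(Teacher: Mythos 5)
The statement you are addressing appears in the paper only as Conjecture~\ref{c:frac-colorful-helly-leray}: the authors do not prove it, and they explicitly say that the relevant section ``does not really prove any new result'' but only surveys a possible approach together with the difficulty they encountered. Your proposal is in exactly the same position. You correctly observe that once a $d$-Leray analogue of Theorem~\ref{t:c-k-p} is available, the remaining steps (the Kim-type counting bound and the passage to the fractional statement) go through verbatim; but the entire mathematical content of the conjecture is concentrated in that one step, and your proposal leaves it open (``until that colored shifting machinery is set up\dots''). Identifying the obstacle is not the same as overcoming it, so this is a genuine gap, not a proof.

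For comparison, the route the authors sketch differs somewhat from yours. They do not attempt to adapt the $A_{\kvec}\cap W_{\kvec}$ argument of Theorem~\ref{t:c-k-p} directly; instead they pass through Alexander duality: a $d$-Leray complex with complete $(d-1)$-skeleton is Alexander dual to a Cohen--Macaulay complex, exterior algebraic shifting preserves the $f$-vector and the Cohen--Macaulay property and commutes with Alexander duality (Theorem~\ref{t:shifting}), and this yields the non-colorful optimal bound via Lemma~\ref{l:extremal}. The concrete obstruction they record is that the colorful (balanced) algebraic shifting of Babson and Novik preserves the colorful $f$-vector, but the Alexander dual of a $d$-Leray complex is not in general balanced, so the duality step breaks down. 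Your diagnosis --- that block-diagonality of $A$ and the genericity needed to detect the Leray hypothesis pull in opposite directions --- is a reasonable description of the same difficulty seen from the primal side, and your suggestion to start with $c=2$ is sensible, but as written the proposal establishes nothing beyond what the paper already records as an open problem.
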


In fact, our original approach how to prove
Theorem~\ref{t:cfh_optimal_collapsible} was to prove directly
Conjecture~\ref{c:frac-colorful-helly-leray}. Indications that this could be
possible are that both the optimal fractional Helly
theorem~\cite{alon-kalai-matousek-meshulam02, kalai02} and the colorful Helly
theorem~\cite{kalai2005topological} hold for $d$-Leray complexes. In addition,
there is a powerful tool, algebraic shifting, developed by
Kalai~\cite{kalai02}, which turned out to be very useful in attacking similar
problems.

In the remainder of this section we briefly survey a possible approach towards
Conjecture~\ref{c:frac-colorful-helly-leray} but also the difficulty that we
encountered. Because we do not really prove any new result in this section, our
description is only sketchy.

Our starting point is the proof of the optimal fractional Helly theorem
for $d$-Leray complexes. The key ingredient is the following theorem of
Kalai~\cite[Theorem~13]{alon-kalai-matousek-meshulam02}.

\begin{theorem}
\label{t:optimal-fractional-Helly}
  Let $\KK$ be a $d$-Leray complex and $f_0(\KK) = n$. Then $f_d(\KK) >
  \binom{n}{d+1} - \binom{n-r}{d+1}$ implies $f_{d+r}(\KK) > 0$ (where $f(\KK)$
  denotes the $f$-vector of $\KK$).
\end{theorem}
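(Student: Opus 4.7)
My plan follows Kalai's original strategy based on algebraic shifting. The key external input is that the symmetric algebraic shifting operator $\Delta$ preserves both the $f$-vector and the Leray number; in particular, $\Delta(\KK)$ is again $d$-Leray. Replacing $\KK$ by $\Delta(\KK)$, we may assume that $\KK$ is shifted on vertex set $[n]$. A standard observation (valid for any $d$-Leray complex) is that every minimal non-face has cardinality at most $d+1$: otherwise the induced subcomplex on such a non-face would be the boundary of a simplex, a sphere of dimension $\geq d$, violating the Leray property. This gives the convenient reformulation: for any $S \subseteq [n]$ with $|S| \geq d+1$, we have $S \in \KK$ if and only if every $(d+1)$-subset of $S$ lies in $\KK$.

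The hypothesis rewrites as $|M| < \binom{n-r}{d+1}$, where $M$ is the collection of $(d+1)$-element non-faces of $\KK$. Because $\KK$ is shifted, $M$ is an up-set in the componentwise partial order on $\binom{[n]}{d+1}$ given by $\{s_1<\cdots<s_{d+1}\} \leq \{t_1<\cdots<t_{d+1}\}$ if and only if $s_i \leq t_i$ for every $i$. The principal up-set generated by $T := \{r+1, r+2, \ldots, r+d+1\}$ is exactly $\binom{\{r+1,\ldots,n\}}{d+1}$, of cardinality $\binom{n-r}{d+1}$. Since $|M|$ is strictly smaller, $T \notin M$, and hence $T \in \KK$.

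To conclude, set $S := \{1, 2, \ldots, d+r+1\}$. Every $(d+1)$-subset $U = \{u_1<\cdots<u_{d+1}\}$ of $S$ satisfies $u_i \leq (d+r+1) - (d+1-i) = r+i$, so $U \leq T$ in the componentwise order. By shiftedness, each such $U$ belongs to $\KK$. Applying the reformulation to $S$, we obtain $S \in \KK$, and therefore $f_{d+r}(\KK) \geq 1 > 0$.

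The main obstacle is that this argument depends on Kalai's theorem that algebraic shifting preserves the Leray number. Its proof uses exterior algebra machinery in the spirit of Section~\ref{s:exterior}, but is technically more involved, combining the shift construction with an inductive homological argument on induced subcomplexes.
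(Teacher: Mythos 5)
Your argument is correct, and it reaches the conclusion by a genuinely different route from the paper's, although the two are close relatives. The paper never invokes preservation of the Leray property directly: it passes to the Alexander dual $\KK^*$, which (by Kalai's observation) is Cohen--Macaulay once $\KK$ is assumed to have complete $(d-1)$-skeleton, shifts the \emph{dual} using that exterior shifting preserves Cohen--Macaulayness and commutes with Alexander duality, and runs the combinatorics (purity plus shiftedness) on the dual side to obtain the containment $\KK^e \subseteq \Delta_{D-d-1}*\Delta^{(d-1)}_{n-D+d-1}$ of Lemma~\ref{l:extremal}, from which the bound on $f_d$ is read off. You instead shift the primal complex, invoke preservation of the Leray number as a black box, and combine the elementary fact that minimal non-faces of a $d$-Leray complex have at most $d+1$ elements with an up-set count in the componentwise order to exhibit the face $\{1,\dots,d+r+1\}$ explicitly; all of these steps check out, including the identification of the principal up-set generated by $\{r+1,\dots,r+d+1\}$ with $\binom{\{r+1,\dots,n\}}{d+1}$. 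Your combinatorial endgame is cleaner and more direct (it produces the $(d+r)$-face rather than bounding $f_d$ through a join of simplices), but the cost is concentrated in the black box: as the remark of Kalai quoted in the paper makes explicit, the standard proof that shifting preserves the Leray property is precisely the Alexander-duality-plus-Cohen--Macaulay argument that Lemma~\ref{l:extremal} carries out, so the two proofs rest on the same deep input --- the paper exposes it, you encapsulate it. One small caveat: you appeal to \emph{symmetric} shifting, whereas the cited preservation results are stated for \emph{exterior} shifting; either works, but you should match the operator to the reference you use.
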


As far as we can judge, the only proof of
Theorem~\ref{t:optimal-fractional-Helly} in the literature 
follows from the first and the third sentence in the following remark in~\cite{kalai02}:
\medskip

``It is not hard to see (although it has been overlooked for a long time) that
the class of $d$-Leray complexes (for some $d$) with complete
$(d-1)$-dimensional skeletons is precisely the Alexander dual of the class of
Cohen-Macaulay complexes. This observation implies that the fact that shifting
preserves the Leray property easily follows from the fact that shifting
preserves the Cohen-Macaulay property. Moreover, it shows that the
characterization of face numbers of $d$-Leray complexes follows from the
corresponding characterization for Cohen-Macaulay complexes.'' 
\medskip

For completeness we add that the characterization of face numbers of Cohen-Macaulay
complexes has been done by Stanley~\cite{stanley75}. Given a simplicial complex
$\KK$ on vertex set $V$ its \emph{Alexander dual} is a simplicial complex
defined as $\KK^* := \{\sigma \subseteq V\colon V \setminus \sigma \not\in
\KK\}$. We skip the definition of Cohen-Macaulay complex because we will only
use it implicitly but we refer, for example, to~\cite[\S4]{kalai02} for more
details.
%Our interpretation of the first sentence in Kalai's remark is the
%following. 
%
%\begin{observation}[Kalai]
%  \begin{enumerate}[(i)]
%  \item Let $\KK$ be a $d$-Leray complex with complete $(d-1)$-skeleton then its Alexander dual $\KK^*$ is Cohen-Macaulay.
%  \item Let $\KK$ be a Cohen-Macaulay complex on $n$ vertices then $\KK^*$ is $(n-\dim(\KK)-2)$-Leray with complete $(n-\dim(\KK)-3)$-skeleton.  
%  \end{enumerate}
%\end{observation}

A simplicial complex $\KK$ on vertex set $[n]$ is called \emph{shifted} if for all integers $i$ and $j$ with $1 \leq i < j \leq n$ and all
faces $A$ of $\KK$ such that $j \in A$ and $i \notin A$, the set $(A \setminus \{j\}) \cup \{i\}$ is a face of $\KK$.
\emph{Exterior algebraic shifting} is a function that associates to a
simplicial complex $\KK$ a shifted complex $\KK^e$ , while preserving many
interesting invariants of $\KK$. Below we list some properties of exterior
algebraic shifting that we will use. A simplicial complex is \emph{pure}
if all its inclusion-maximal faces have the same dimension.
\begin{theorem}
  \label{t:shifting}
  \begin{enumerate}[(i)]
  \item ~\cite{kalai02}[Theorem 2.1] Exterior algebraic shifting preserves the $f$-vector.  
  \item ~\cite{kalai02}[Theorem 4.1] If $\KK$ is Cohen-Macaulay, then $\KK^e$ is
      Cohen-Macaulay, in particular, pure.
  \item ~\cite{kalai02}[3.5.6] Exterior algebraic shifting and Alexander duality commute.
  \end{enumerate}
 \end{theorem}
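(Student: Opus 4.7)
The plan is to import each item of Theorem~\ref{t:shifting} directly from Kalai's comprehensive treatment \cite{kalai02}, where each of the three proofs relies on several pages of algebraic setup that we do not wish to reproduce. Still, it is instructive to sketch the flavor of each argument so that the reader sees why they are plausible.

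I would first recall the definition of exterior algebraic shifting in the notation already fixed in Section~\ref{s:exterior}. Given a simplicial complex $\KK$ on vertex set $[n]$, consider the subspace $W_\KK \subseteq \bigwedge V$ spanned by $\{e_S : S \in \KK\}$, pass to a generic basis $(g_j)_{j \in [n]}$ of $V$ (exactly as in our block-diagonal construction, but now without the color-block restriction), re-express $W_\KK$ in terms of the basis $(g_S)_{S \subseteq [n]}$, and declare $\KK^e$ to be the set of $S$ such that the $g_S$-coordinate projection is nonzero on $W_\KK$ when $S$ is minimal in the revlex order. All three items of the theorem are then consequences of this description.

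For~(i), the equality $f(\KK) = f(\KK^e)$ is essentially a Gaussian-elimination observation: shifting performs a generic invertible linear change of basis on each $\bigwedge^k V$ followed by revlex extraction, and both operations preserve $\dim (W_\KK \cap \bigwedge^k V)$; since this dimension is exactly $f_{k-1}(\KK) = f_{k-1}(\KK^e)$, the $f$-vector is unchanged. For~(ii), one uses the fact that Cohen--Macaulayness of $\KK$ is equivalent, via Reisner's criterion translated into the exterior face ring, to the vanishing of certain Tor modules; these are upper semicontinuous under the deformation defining the shift, and generic choice of coordinates forces them to attain their minimal value, preserving the Cohen--Macaulay property and in particular purity. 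For~(iii), both shifting and Alexander duality admit explicit combinatorial descriptions on the lattice of shifted complexes (namely $\KK^* = \{A \subseteq [n] : [n]\setminus A \notin \KK\}$), and the commutation $(\KK^*)^e = (\KK^e)^*$ is verified by a direct bijective argument on the revlex-extracted monomial basis of $\bigwedge V$.

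The main obstacle in a self-contained treatment is~(ii): the preservation of Cohen--Macaulayness under passage to a generic initial structure is genuinely deep, relying on semicontinuity of homological invariants in the spirit of Bayer--Stillman, and is what makes algebraic shifting a powerful tool rather than a cosmetic reordering. Items~(i) and~(iii), by contrast, are essentially formal once the definition of $\KK^e$ is in place. For the purposes of our conjectured topological extension we only need the three properties as stated, so in this paper I would be content to quote \cite{kalai02} for all three.
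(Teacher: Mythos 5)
The paper does not prove Theorem~\ref{t:shifting} at all: it is stated purely as a collection of imported results, with each item attributed to the indicated location in \cite{kalai02}, exactly as you ultimately propose to do. Your concluding decision to quote \cite{kalai02} for all three items therefore matches the paper's treatment, and your informal sketches of why each item holds are a harmless (if slightly loose) bonus.
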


The next lemma is a possible replacement of the third sentence in Kalai's
remark how to prove Theorem~\ref{t:optimal-fractional-Helly}. We prove it as
motivation for the tools we would need in the colorful scenario.

\begin{lemma}
  \label{l:extremal}
  Let  $\KK$ be a $d$-Leray complex on $[n]$ with complete $(d-1)$-skeleton and
  let $D=\dim(\KK) + 1$. Then $\KK^e \subseteq
  \Delta_{D-d-1}*\Delta^{(d-1)}_{n-D+d-1}$.
\end{lemma}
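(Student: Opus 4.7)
I'd pass to the shifted complex $\KK^e$ and exploit the interaction between shiftedness and the $d$-Leray property. First, $\KK^e$ inherits the relevant structure of $\KK$: Theorem~\ref{t:shifting}(i) preserves the $f$-vector, so $\KK^e$ has complete $(d-1)$-skeleton and $\dim \KK^e = D-1$; moreover $\KK^e$ is $d$-Leray, as seen by applying Kalai's Alexander duality equivalence (Cohen-Macaulay $\leftrightarrow$ $d$-Leray with complete $(d-1)$-skeleton) together with Theorem~\ref{t:shifting}(ii)(iii): $(\KK^e)^*=(\KK^*)^e$ is Cohen-Macaulay, and the equivalence applied in reverse yields $d$-Leray-ness of $\KK^e$.

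Next, I'd unpack the desired inclusion. Labelling the vertex set of $\Delta_{D-d-1}$ as $[D-d]$ and that of $\Delta^{(d-1)}_{n-D+d-1}$ as $\{D-d+1,\ldots,n\}$, a subset of $[n]$ lies in the join iff it contains at most $d$ elements from $\{D-d+1,\ldots,n\}$. So it suffices to show every face of $\KK^e$ has at most $d$ elements in this ``large'' part.

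Assume for contradiction that some $\sigma \in \KK^e$ has at least $d+1$ large elements. Pick any $(d+1)$-subset $\tau$ of $\sigma \cap \{D-d+1,\ldots,n\}$; sorting $\tau = \{t_1<\cdots<t_{d+1}\}$, one has $t_i \geq D-d+i$, so $\tau \geq \{D-d+1,\ldots,D+1\}$ componentwise, and shiftedness gives $\{D-d+1,\ldots,D+1\} \in \KK^e$. Conversely, any $(d+1)$-subset of $[D+1]$ is componentwise $\leq \{D-d+1,\ldots,D+1\}$, so shiftedness forces the entire $d$-skeleton on $[D+1]$ into $\KK^e$. Then I would induct on $s$ from $d+2$ to $D+1$: if some $s$-subset $\sigma' \subseteq [D+1]$ were missing from $\KK^e$, the complex $\KK^e[\sigma']$ would equal $\partial \Delta^{\sigma'}$, an $(s-2)$-sphere with $\tilde{H}_{s-2} \neq 0$, contradicting $d$-Leray-ness since $s-2 \geq d$. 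This forces $[D+1] \in \KK^e$, which is impossible since $|[D+1]|=D+1>D=\dim \KK^e+1$.

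The main obstacle is the middle step: leveraging shiftedness to promote a single $(d+1)$-face contained in the ``large'' vertices into the complete $d$-skeleton on $[D+1]$. Once this foothold is in place, a routine $d$-Leray sphere-filling induction fabricates $[D+1]$ as a face, clashing with the dimension bound. A minor technical point is carefully extracting $d$-Leray-ness of $\KK^e$ from the stated preservation theorems via Kalai's Alexander duality equivalence, but this is essentially a direct consequence of the commutativity of shifting with duality.
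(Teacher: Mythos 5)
Your proof is correct, but it takes a genuinely different route from the paper's. The paper works almost entirely on the dual side: it notes that $\KK^*$ is Cohen--Macaulay (Kalai's duality equivalence), shifts it, and uses \emph{purity} of the shifted Cohen--Macaulay complex $(\KK^*)^e$ together with shiftedness to show that $(\KK^*)^e$ contains the join $\Delta_{D-d-1}*\Delta^{(n-D-2)}_{n-D+d-1}$; dualizing back via Theorem~\ref{t:shifting}(iii) gives the claimed inclusion. You instead stay on the primal side: after extracting $d$-Lerayness of $\KK^e$ from the same duality equivalence (this is exactly the second sentence of Kalai's remark, so it is a legitimate black box here), you use shiftedness to promote a hypothetical violating face to the full $d$-skeleton on $[D+1]$, and then a sphere-filling induction with the Leray condition to force $[D+1]\in\KK^e$, contradicting $\dim\KK^e=D-1$. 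Your combinatorial steps check out: the componentwise-domination argument for shifted complexes is standard, the induced subcomplex $\KK^e[\sigma']$ is indeed $\partial\Delta^{\sigma'}$ under the induction hypothesis, and $s-2\geq d$ puts its nonvanishing homology in the forbidden range. What each approach buys: the paper's is shorter once the dual machinery is set up, since purity does all the work in one line; yours keeps the argument in $\KK^e$ itself, uses only the vanishing-homology definition of $d$-Leray, and in effect re-derives the combinatorial characterization of shifted $d$-Leray complexes, which makes the structural reason for the inclusion more transparent. Both proofs ultimately rest on the same unproved-in-the-paper ingredient, namely the Alexander duality equivalence between Cohen--Macaulay complexes and $d$-Leray complexes with complete $(d-1)$-skeleton.
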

\begin{proof}
  By the first sentence of Kalai's remark, the Alexander dual $\KK^*$ of
  $\KK$ is a Cohen-Macaulay complex. By the definition of Alexander dual,
  it has dimension $n-d-2$ and contains complete
$(n-D-2)$-skeleton. Hence, properties (i) and (ii) of Theorem~\ref{t:shifting}
imply that the exterior algebraic shifting $(\KK^*)^e$ of $\KK^*$ is a pure
shifted complex of dimension $n-d-2$ with complete $(n-D-2)$-skeleton. If we
take any subset $A$ of size $n-D-1$ in $(\KK^*)^e$, then $A$ is a face and by
purity there must be a face of size $n-d-1$ that contains $A$. Now, since
$(\KK^*)^e$ is shifted we have that $\{1,2,\dots,D-d \} \cup A\in(\KK^*)^e$.
  This implies that $\Delta_{D-d-1}*\Delta^{(n-D-2)}_{n-D+d-1} \subseteq (\KK^*)^e =
  (\KK^e)^*$, by Theorem~\ref{t:shifting}(iii). Taking Alexander dual from both sides proves the first part of the
  statement. (Using that $(\LL^*)^* = \LL$; $\LL_1 \subseteq \LL_2
  \Rightarrow \LL_2^* \subseteq \LL_1^*$; and
  $(\Delta_{D-d-1}*\Delta^{(n-D-2)}_{n-D+d-1})^* =
  \Delta_{D-d-1}*\Delta^{(d-1)}_{n-D+d-1}$.)
\end{proof}

For completeness, Theorem~\ref{t:optimal-fractional-Helly} quickly follows from
Lemma~\ref{l:extremal}. Indeed, if $\KK$ is $d$-Leray such that $f_{d+r}(\KK) =
0$, then $D := \dim \KK + 1 \leq d + r$. In addition, we can assume without
loss of generality that $\KK$ contains complete $(d-1)$-skeleton. Consequently,
Lemma~\ref{l:extremal} gives $f_d(\KK) = f_d(\KK^e) \leq
f_d(\Delta_{D-d-1}*\Delta^{(d-1)}_{n-D+d-1}) = \binom{n}{d+1} -
\binom{n-D+d}{d+1} \leq \binom{n}{d+1} -
\binom{n-r}{d+1}$.

%\martin{For the moment, I cannot verify the last step. If I take the Alexander dual of
%$\Delta_{r-d-1}*\Delta^{(d-1)}_{n-r+d-1}$, I am getting
%$\Delta_{r-d-1}*\Delta^{(n-r-2)}_{n-r+d-1}$. But this is useless as it has
%complete $d$-skeleton.}

Now, in order to attack Conjecture~\ref{c:frac-colorful-helly-leray}, we
would like to do something similar in colorful setting. In particular, we need
to preserve the colorful $f$-vector.
Babson and Novik~\blue{\cite{babson2004}} give a definition of colorful algebraic shifting which
preserves the colorful $f$-vector. Nevertheless, the conjecture does not follow
immediately from their result as the Alexander dual of a $d$-Leray complex is
not in general balanced.

\iffull
\section*{Acknowledgments}
We thank Xavier Goaoc and Eran Nevo for providing us with useful references.

\bibliographystyle{alpha}
\bibliography{cfh}
\fi

\end{document}